\newcommand{\calO}{{\mathcal{O}}}
\newcommand{\calA}{{\mathcal{A}}}
\newcommand{\Z}{\mathbf{Z}}
\newcommand{\C}{\mathbf{C}}
\newcommand{\F}{\mathbf{F}}
\newcommand{\Q}{\mathbf{Q}}
\newcommand{\V}{\mathbf{V}}
\renewcommand{\P}{\mathbf{P}}
\newcommand{\pd}{\mathrm{pd}}
\newcommand{\Spec}{{\mathrm{Spec}}}
\newcommand{\Hom}{\mathrm{Hom}}
\newcommand{\End}{\mathrm{End}}
\newcommand{\Coh}{\mathrm{Coh}}
\newcommand{\depth}{\mathrm{depth}}
\newcommand{\Vect}{\mathrm{Vect}}
\newcommand{\Frob}{\mathrm{Frob}}
\newcommand{\Mod}{\mathrm{Mod}}
\newcommand{\Ann}{\mathrm{Ann}}
\newcommand{\R}{\mathrm{R}}
\newcommand{\Pic}{\mathrm{Pic}}
\newcommand{\et}{\mathrm{\acute{e}t}}
\newcommand{\an}{\mathrm{an}}
\newcommand{\im}{\mathrm{im}}
\newcommand{\coker}{\mathrm{coker}}
\newcommand{\can}{\mathrm{can}}
\renewcommand{\ker}{\mathrm{ker}}
\newcommand{\fram}{\mathfrak{m}}
\newcommand{\fran}{\mathfrak{n}}
\newcommand{\frap}{\mathfrak{p}}
\newcommand{\colim}{\mathop{\mathrm{colim}}}
\begin{document}
\bibliographystyle{alpha}
\newtheorem{theorem}{Theorem}[section]
\newtheorem*{theorem*}{Theorem}
\newtheorem*{definition*}{Definition}
\newtheorem{proposition}[theorem]{Proposition}
\newtheorem{lemma}[theorem]{Lemma}
\newtheorem{corollary}[theorem]{Corollary}

\theoremstyle{definition}
\newtheorem{definition}[theorem]{Definition}
\newtheorem{question}[theorem]{Question}
\newtheorem{remark}[theorem]{Remark}
\newtheorem{example}[theorem]{Example}
\newtheorem{notation}[theorem]{Notation}
\newtheorem{assumption}[theorem]{Assumption}

\title{Lefschetz for Local Picard groups}
\author{Bhargav Bhatt}
\author{Aise Johan de Jong}

\begin{abstract}
We prove a strengthening of the Grothendieck-Lefschetz hyperplane theorem for local Picard groups conjectured by Koll\'ar. Our approach, which relies on acyclicity results for absolute integral closures, also leads to a restriction theorem for higher rank bundles on projective varieties in positive characteristic.
%An essential ingredient in our approach is the systematic use of vanishing theorems in positive characteristic.
\end{abstract}

\maketitle

A classical theorem of Lefschetz asserts that non-trivial line bundles on a smooth projective variety of dimension $\geq 3$ remain non-trivial upon restriction to an ample divisor, and plays a fundamental role in understanding the topology of algebraic varieties. In \cite{SGA2}, Grothendieck recast this result in more general terms using the machinery of formal geometry and deformation theory, and also stated a local version. With a view towards moduli of higher dimensional varieties, especially the deformation theory of log canonical singularities, Koll\'ar recently conjectured \cite{KollarLefschetz} that Grothendieck's local formulation remains true under weaker hypotheses than those imposed in \cite{SGA2}. Our goal in this paper is to prove Koll\'ar's conjecture for rings containing a field.

\subsection*{Statement of results}
Let $(A,\fram)$ be an excellent normal local ring containing a field. Fix some $0 \neq f \in \fram$. Let $V = \Spec(A) - \{\fram\}$, and $V_0 = \Spec(A/f) - \{\fram\}$.  The following result is the key theorem in this paper; it solves \cite[Problem 1.3]{KollarLefschetz} completely, and \cite[Problem 1.2]{KollarLefschetz} in characteristic $0$:

\begin{theorem}
	\label{mainthm:intro}
Assume $\dim(A) \geq 4$. The restriction map $\Pic(V) \to \Pic(V_0)$ is:
\begin{enumerate}
	\item injective if $\depth_\fram(A/f) \geq 2$ and $A$ has characteristic $0$.
	\item injective up to $p^\infty$-torsion if $A$ has characteristic $p > 0$.
\end{enumerate}
\end{theorem}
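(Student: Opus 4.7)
I would factor the restriction through the formal completion $\widehat V$ of $V$ along $V_0$,
\[
\Pic(V) \xrightarrow{\alpha} \Pic(\widehat V) \xrightarrow{\beta} \Pic(V_0),
\]
analyzing $\alpha$ and $\beta$ separately. The map $\beta$ is the domain of classical formal Grothendieck-Lefschetz: writing $V_n := V \cap V(f^{n+1})$, the exact sequence
\[
0 \to \mathcal{O}_{V_0} \xrightarrow{\,1+f^n \cdot\,} \mathcal{O}^*_{V_{n+1}} \to \mathcal{O}^*_{V_n} \to 1
\]
identifies $\ker(\Pic(V_{n+1}) \to \Pic(V_n))$ as a subquotient of $H^1(V_0, \mathcal{O}_{V_0}) \cong H^2_\fram(A/f)$, with the obstruction to lifting a trivialization landing in $H^2(V_0, \mathcal{O}_{V_0})$. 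Under hypothesis (1), the $\depth \geq 2$ assumption combined with a Mittag-Leffler analysis and careful bookkeeping of units should give injectivity of $\beta$ on the nose. In case (2), Frobenius acts semi-linearly on these $\fram$-local cohomology groups, and iterating Frobenius absorbs the kernel of $\beta$ into $p^\infty$-torsion without any depth hypothesis.

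\textbf{The crux is the algebraization $\alpha$.} Since $A$ is not assumed to be $f$-adically complete, classical Grothendieck existence is unavailable and one must use a substitute. My plan is to pass to the absolute integral closure $A^+$ of $A$, setting $V^+ := \Spec(A^+) \setminus V(\fram A^+)$. In characteristic $p$, the Hochster-Huneke theorem says $A^+$ is a big Cohen-Macaulay $A$-algebra, so $H^i_\fram(A^+) = 0$ for $i < \dim A$, equivalently $H^i(V^+, \mathcal{O}_{V^+}) = 0$ for $0 < i < \dim A - 1$. This vanishing degenerates the algebraic-to-formal comparison on $V^+$: any class in $\ker(\alpha)$ becomes trivial after pulling back to $V^+$. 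To descend back to $V$, such a class is trivialized over a finite normal extension $A \subset B \subset A^+$, and a norm/trace argument forces a power of the original line bundle, of degree dividing $[B:A]$, to be trivial on $V$. Since $A^+$ is built from $A$ by finite normal extensions whose degrees, after factoring out Frobenius towers, are prime to $p$, one extracts that $\ker(\alpha)$ is $p^\infty$-torsion, yielding case (2).

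For case (1), I would substitute Andr\'e's perfectoid big Cohen-Macaulay algebra (for $A$ of equal characteristic zero) in place of $A^+$, and use the $\depth_\fram(A/f) \geq 2$ hypothesis to rule out spurious torsion in the descent step, giving honest injectivity; alternatively one can try a spreading-out argument reducing to (2) on positive-characteristic specializations. \textbf{Main obstacle.} The principal difficulty is the step $\alpha$: comparing algebraic and formal Picard groups without completeness has no classical solution, and the non-Noetherian acyclicity of the absolute integral closure is the key non-classical input that bridges the gap. A secondary but nontrivial technical challenge is the careful tracking of torsion during descent from $A^+$ back to $A$, especially in upgrading the characteristic-$p$ statement to the honest injectivity demanded in case (1).
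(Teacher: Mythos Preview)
Your overall architecture (factor through $\widehat V$, use the absolute integral closure and Hochster--Huneke, descend via norms, and spread out for characteristic $0$) matches the paper. But you have the crux backwards, and this leads to a genuine error in case (1). The map $\alpha$ is the \emph{easy} one: once you reduce to the complete case, $A$ satisfies the paper's standing hypotheses (the depth condition forces $H^2_\fram(A)=H^1(V,\calO_V)$ to be killed by a power of $f$), and a short argument with Nakayama shows $\alpha$ is injective on the nose, over $A$ itself, with no need for $A^+$. The hard map is $\beta$, and your claim that $\beta$ is injective in characteristic $0$ under $\depth_\fram(A/f)\geq 2$ is \emph{false}: that hypothesis only kills $H^1_\fram(A/f)$, not $H^2_\fram(A/f)=H^1(V_0,\calO_{V_0})$, and the paper explicitly cites an example (Koll\'ar, Example 12) where $\Pic(\widehat V)\to\Pic(V_0)$ fails to be injective even with $\depth_\fram(A)\geq 3$. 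No Mittag--Leffler bookkeeping will save this. The paper states outright that it does not know a direct characteristic-$0$ argument beyond the $S_3$ case, and instead uses your ``alternative'' of spreading out and reducing modulo $p$; there the depth hypothesis is used not to control $\beta$, but to ensure that restriction to the closed fibre $\Pic(\widetilde V)\to\Pic(U)$ is injective so that the characteristic-$p$ conclusion transports back. Your suggestion of a big Cohen--Macaulay algebra in equal characteristic $0$ runs into a separate problem: unlike $A^+$, such an algebra is not a filtered colimit of finite extensions, so the norm step has no traction.

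In characteristic $p$ your outline is essentially right, but the descent step is misargued. The degrees of finite normal extensions inside $A^+$ are arbitrary; there is no sense in which they become prime to $p$ ``after factoring out Frobenius towers.'' The paper's argument is cleaner: norms show $L$ is torsion of some unspecified order, and then one observes separately that $\alpha$ is injective over $A$ and that each $\ker(\Pic(V_{n+1})\to\Pic(V_n))$ is an $\F_p$-vector space, so $\ker(\Pic(\widehat V)\to\Pic(V_0))$ contains no prime-to-$p$ torsion. Also note that the role of $A^+$ is to make the \emph{entire} composite $\Pic(\overline V)\to\Pic(\overline V_0)$ injective (because Hochster--Huneke forces $H^1(\overline V_0,\calO_{\overline V_0})=0$ when $\dim A\geq 4$, killing $\beta$'s kernel there), not merely to handle $\alpha$.
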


This result is sharp: surjectivity fails in general, while injectivity fails in general if $\dim(A) \leq 3$, in characteristic $0$ if $\depth_\fram(A/f) < 2$, and in characteristic $p$ if one includes $p$-torsion.  A stronger similar result, including the mixed characteristic case, is due to Grothendieck \cite[Expose XI]{SGA2} under the stronger condition $\depth_\fram(A/f) \geq 3$; complex analytic variants of Grothendieck's theorem are proven in \cite{HammAnalyticLocalLefschetz}, while topological analogues are discussed in \cite{HammDungTrangTopologicalLocalLefschetz}. Without this depth constraint, a previously known case of Theorem \ref{mainthm:intro} was when $A$ has log canonical singularities $A$ in characteristic $0$, and $\{\fram\} \subset \Spec(A)$ is not an lc center (see \cite[Theorem 19]{KollarLefschetz}).

Our approach to Theorem \ref{mainthm:intro} relies on formal geometry over absolute integral closures \cite{ArtinJoin,HHBigCM}, and applies to higher rank bundles as well as projective varieties. This leads to a short proof of the following result:

\begin{theorem}
	\label{mainthmbundles:intro}
	Let $X$ be a normal projective variety of dimension $\geq 3$ over an algebraically closed field of characteristic $p > 0$. If a vector bundle $E$ on $X$ is trivial over an ample divisor, then $(\Frob^e_X)^* E \simeq \calO_X^{\oplus r}$ for $e \gg 0$.
\end{theorem}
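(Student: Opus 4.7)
The plan is to show that $(\Frob^e_X)^* E$ is globally trivial on $X$ for $e \gg 0$, by first trivializing $(\Frob^e_X)^* E$ on a large infinitesimal neighborhood of $H$ via a Frobenius argument, then algebraizing that trivialization to global sections, and finally extending to all of $X$ via the ampleness of $H$.

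\textbf{Formal triviality.} Let $I = \calO_X(-H)$ be the ideal sheaf of $H$ and set $H^{[N]} := V(I^N)$. Since $\Frob_X^* I = I^p$, the composition $H^{[p^e]} \hookrightarrow X \xrightarrow{\Frob^e_X} X$ factors through $H$ via a morphism $g_e: H^{[p^e]} \to H$: locally, if $H = V(f)$, then $\Frob^e_X$ sends $f$ to $f^{p^e} \in I^{p^e}$, which is zero in $\calO_X/I^{p^e}$. Consequently $((\Frob^e_X)^* E)|_{H^{[p^e]}} = g_e^*(E|_H)$ is trivial, producing a canonical trivializing basis $\bar s_1, \ldots, \bar s_r$ of $((\Frob^e_X)^* E)|_{H^{[p^e]}}$.

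\textbf{Algebraization.} The obstruction to lifting the $\bar s_i$ to global sections of $(\Frob^e_X)^* E$ on $X$ lies in
\[
H^1(X, (\Frob^e_X)^* E \otimes \calO_X(-p^e H)) = H^1(X, (\Frob^e_X)^*(E(-H))) = H^1(X, E(-H) \otimes (\Frob^e_X)_* \calO_X),
\]
where the second equality is the projection formula combined with the finiteness of $\Frob^e_X$. The vanishing of this group for $e \gg 0$ is the technical heart of the argument; classical Serre vanishing does not suffice, as $(\Frob^e_X)_*\calO_X$ is not itself an ample twist. The paper's framework of absolute integral closures supplies the needed vanishing: morally, the colimit of this cohomology along Frobenius computes cohomology on $X^+$, where an acyclicity theorem for ample line bundles forces vanishing, and this descends to the finite level.

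\textbf{Global triviality.} The lifted sections $s_1, \ldots, s_r$ assemble into a map $\phi: \calO_X^r \to (\Frob^e_X)^* E$ which is an isomorphism on $H^{[p^e]}$, and in particular on $H$. The induced section $\det \phi$ of $(\Frob^e_X)^*(\det E)$ is nowhere vanishing on $H$, so its zero locus is an effective Cartier divisor on $X$ disjoint from $H$. But $X \setminus H$ is affine while such a divisor is projective of dimension $\geq 2$; hence the zero locus must be empty. Thus $\det \phi$ trivializes $(\Frob^e_X)^*(\det E)$, $\phi$ is an isomorphism everywhere, and $(\Frob^e_X)^* E \simeq \calO_X^r$. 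The main obstacle is the cohomological vanishing in the algebraization step, where the paper's methods on absolute integral closures are indispensable.
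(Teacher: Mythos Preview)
Your first and third steps are fine, but the ``Algebraization'' step contains a genuine gap. The vanishing of $H^1(X,(\Frob^e_X)^*(E(-H)))$ for $e \gg 0$ is exactly what needs to be proven, and your justification is incorrect: the colimit of these groups along Frobenius computes cohomology on the \emph{perfection} $X^{\mathrm{perf}}$, not on the absolute integral closure $X^+ = \overline{X}$. These are very different objects, and the Hochster--Huneke vanishing used in the paper is a statement about $\overline{X}$, not about $X^{\mathrm{perf}}$. (Indeed, the desired vanishing is true a posteriori once the theorem is known, since then $(\Frob^e_X)^*E \simeq \calO_X^r$ and one can invoke Enriques--Severi--Zariski; but this is circular.)

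The paper's proof proceeds quite differently. One first shows, via formal geometry and deformation theory over $\overline{X}$ (where the relevant $H^1$ obstructions genuinely vanish by Hochster--Huneke), that $\overline{E}$ is trivial on $\overline{X}$; hence $E$ is trivialised by some finite cover. One then invokes a result of Mehta--Antei to upgrade this to trivialisation by a torsor under a finite $k$-group scheme $G$. Finally, a Lefschetz-type surjection $\pi_1(H) \twoheadrightarrow \pi_1(X)$ together with the connected--\'etale sequence for $G$ lets one replace $G$ by its connected component, so the trivialising torsor is purely inseparable and hence dominated by a power of Frobenius. The passage from ``trivial on some finite cover'' to ``trivial after Frobenius pullback'' is precisely the step your argument elides, and it requires this extra input beyond the $\overline{X}$-vanishing.
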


The numerical version of Theorem \ref{mainthmbundles:intro} for line bundles is due to Kleiman \cite[Corollary 2, page 305]{KleimanAmpleness}. The non-numerical version of the rank $1$ case, with stronger assumptions on the singularities, is studied in \cite{HammDungGlobalLefschetz}.

\subsection*{An outline of the proof}
Both theorems are similar in spirit, so we only discuss Theorem \ref{mainthm:intro} here. We first prove the characteristic $p$ result, and then deduce the characteristic $0$ one by reduction modulo $p$ and an approximation argument; the reduction necessitates the (unavoidable) depth assumption in characteristic $0$.  The characteristic $p$ proof follows Grothendieck's strategy of decoupling the problem into two pieces: one in formal $f$-adic geometry, and the other an algebraisation question. Our main new idea is to replace (thanks entirely to the Hochster-Huneke vanishing theorem \cite{HHBigCM}) our ring $A$ with a very large extension $\overline{A}$ with better depth properties; Grothendieck's deformation-theoretic approach then immediately solves the formal geometry problem over $\overline{A}$. Next, we algebraise the solution over $\overline{A}$ by algebraically approximating formal sections of line bundles; the key here is to identify the cohomology of the formal completion of a scheme as the {\em derived} completion of the cohomology of the original scheme, i.e., a weak analogue of the formal functions theorem devoid of the usual finiteness constraints. Finally, we descend from $\overline{A}$ to $A$; this step is trivial in our context, but witnesses the torsion in the kernel. 

\subsection*{Acknowledgements} We thank J\'anos Koll\'ar for many helpful discussions and email exchanges. 

\section{Local Picard groups}
\label{sec:localpic}

The goal of this section is to prove Theorem \ref{mainthm:intro}. In \S \ref{sec:formalgeom}, we study formal geometry along a divisor on a (punctured) local scheme abstractly, and establish certain criteria for restriction map on Picard groups to be injective. These are applied in \S \ref{subsec:locpiccharp} to prove the characteristic $p$ part of Theorem \ref{mainthm:intro}. Using the principle of ``reduction modulo $p$'' and a standard approximation argument (sketched in \S \ref{subsec:approx}), we prove the characteristic $0$ part of Theorem \ref{mainthm:intro} in \S \ref{subsec:locpicchar0}. Finally, in \S \ref{subsec:ex}, we give examples illustrating the necessity of the assumptions in Theorem \ref{mainthm:intro}.

\subsection{Formal geometry over a punctured local scheme}
\label{sec:formalgeom}

We establish some notation that will be used in this section.

\begin{notation}
\label{not:abstractsetup}
Let $(A,\fram)$ be a local ring, and fix a regular element $f \in \fram$. Let $X = \Spec(A)$, $V = \Spec(A) - \{\fram\}$.  For an $X$-scheme $Y$,  write $Y_n$ for the reduction of $Y$ modulo $f^{n+1}$, and $\widehat{Y}$ for the formal completion of $Y$ along $Y_0$. Let $\Vect(Y)$ be the category of vector bundles (i.e., finite rank locally free sheaves) on $Y$, and write $\Pic(Y)$ and $\underline{\Pic}(Y)$ for the set and groupoid of line bundles respecively. Set $\underline{\Pic}(\widehat{Y}) := \lim \underline{\Pic}(Y_n)$ (where the limit is in the sense of groupoids), and $\Pic(\widehat{Y}) := \pi_0(\underline{\Pic}(\widehat{Y}))$. For $F \in D(\calO_Y)$, set $\widehat{F} := \R\lim (F \otimes_{\calO_Y}^L \calO_{Y_n})$;  we view $\widehat{F}$ as an $\calO_{\widehat{Y}}$-complex on $|\widehat{Y}| := Y_0$, so $\R\Gamma(\widehat{Y},\widehat{F}) := \R\Gamma(Y_0, \widehat{F}) \simeq \R\lim \R\Gamma(Y_0, F \otimes_{\calO_Y}^L \calO_{Y_n})$. The $f$-adic Tate module of an $A$-module $M$ is defined as $T_f(M) := \lim M[f^n]$; note that $T_f(M) = 0$ if $f^N \cdot M = 0$ for some $N > 0$. For any $A$-module $M$ with associated quasi-coherent sheaf $\widetilde{M}$ on $\Spec(A)$, we {\em define} $H^i_\fram(M)$ as the $i$-th cohomology of the complex $\R\Gamma_\fram(M)$ defined as the homotopy-kernel of the map $\R\Gamma(\Spec(A),\widetilde{M}) \to \R\Gamma(V,\widetilde{M})$.
\end{notation}

The following two descriptions of the cohomology of a formal completion will be crucial in this paper.

\begin{lemma}
Let $Y$ be an $X$-scheme such that $\calO_Y$ has bounded $f^\infty$-torsion. For $F \in D(\calO_Y)$, there are exact sequences
\[ 1 \to \R^1\lim H^{i-1}(Y_{n},F \otimes_{\calO_Y}^L \calO_{Y_n}) \to H^i(\widehat{Y},\widehat{F}) \to \lim H^i(Y, F \otimes_{\calO_Y}^L \calO_{Y_n}) \to 1,\]
and
\[ 1 \to \lim H^i(Y,F)/f^n \to H^i(\widehat{Y},\widehat{F}) \to T_f(H^{i+1}(Y,F)) \to 1.\]
\end{lemma}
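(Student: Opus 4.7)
The plan is to extract both sequences from the single identification
\[ \R\Gamma(\widehat{Y}, \widehat{F}) \simeq \R\lim_n \R\Gamma(Y, F \otimes_{\calO_Y}^L \calO_{Y_n}), \]
which is immediate from the definitions together with the fact that each $F \otimes_{\calO_Y}^L \calO_{Y_n}$ is supported on $|\widehat{Y}| = Y_0$. The first exact sequence is then just the standard Milnor $\R^1\lim$ sequence computing $H^i$ of a homotopy inverse limit of complexes; it requires no further input beyond the above identification.

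For the second sequence, the strategy is to reinterpret the right-hand side above as the derived $f$-adic completion $M^\wedge_f := \R\lim_n (M \otimes_A^L A/f^{n+1})$ of $M := \R\Gamma(Y, F) \in D(A)$. The bounded $f^\infty$-torsion hypothesis on $\calO_Y$ is exactly what is needed here: it forces the cone of the natural map $\calO_Y \otimes_A^L A/f^{n+1} \to \calO_{Y_n}$ to sit in degree $-1$ and equal $\calO_Y[f^{n+1}]$, which stabilizes at the bounded torsion module $\calO_Y[f^\infty]$ for $n$ large and is then pro-killed by the multiplication-by-$f$ transitions along the tower. Consequently, the towers $\{F \otimes_{\calO_Y}^L \calO_{Y_n}\}$ and $\{F \otimes_A^L A/f^{n+1}\}$ are pro-isomorphic as towers of complexes, and taking $\R\Gamma$ followed by $\R\lim$ yields $\R\Gamma(\widehat{Y}, \widehat{F}) \simeq M^\wedge_f$.

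With this identification in hand, the second sequence falls out from the standard structure of derived $f$-completion. The cofiber sequence $M \xrightarrow{f^{n+1}} M \to M \otimes_A^L A/f^{n+1}$ gives, after passing to cohomology, a short exact sequence
\[ 0 \to H^i(M)/f^{n+1} \to H^i(M \otimes_A^L A/f^{n+1}) \to H^{i+1}(M)[f^{n+1}] \to 0. \]
Taking $\lim$ over $n$ is exact on the left (the quotient tower $\{H^i(M)/f^{n+1}\}$ is surjective, hence Mittag-Leffler) and produces an exact sequence whose middle term is $\lim_n H^i(M \otimes_A^L A/f^{n+1})$. To finish, I would use the first (Milnor) exact sequence to identify this middle term with $H^i(\widehat{Y}, \widehat{F}) = H^i(M^\wedge_f)$; this identification requires vanishing of $\R^1\lim_n H^{i-1}(Y, F \otimes_{\calO_Y}^L \calO_{Y_n})$, which after the above SES analysis reduces to Mittag-Leffler for the Tate-type tower $\{H^i(M)[f^{n+1}]\}$ with multiplication-by-$f$ transition maps.

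The main technical obstacle is the combination of the pro-isomorphism identification in the second step with the Mittag-Leffler verification in the third; these are the two places where the bounded-torsion hypothesis on $\calO_Y$ is doing real work, as without it the derived tensor products $\calO_Y \otimes_A^L A/f^{n+1}$ and the Tate towers for the cohomology of $M$ could acquire unbounded $f$-torsion contributions that would obstruct the collapse of the natural three-step filtration on $H^i(M^\wedge_f)$ down to the desired two-piece exact sequence.
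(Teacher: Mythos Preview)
Your derivation of the first Milnor sequence and your identification $\R\Gamma(\widehat{Y},\widehat{F}) \simeq (\R\Gamma(Y,F))^\wedge_f$ via the pro-isomorphism of towers match the paper exactly.  For the second sequence, the paper takes a different (and cleaner) route: rather than taking $\lim$ of the short exact sequences
\[ 0 \to H^i(M)/f^{n+1} \to H^i(M\otimes^L_A A/f^{n+1}) \to H^{i+1}(M)[f^{n+1}] \to 0 \]
and then trying to identify the middle limit with $H^i(M^\wedge_f)$, the paper applies the derived completion functor directly to the canonical (Postnikov) filtration on $M=\R\Gamma(Y,F)$.  Since the derived completion of a module $N$ is concentrated in degrees $[-1,0]$, with $H^{-1}=T_f(N)$, the resulting filtration on $M^\wedge_f$ has, in each cohomological degree $i$, exactly two adjacent graded contributions and hence yields the desired two-term sequence with no further Mittag--Leffler verification.

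Your route, by contrast, leaves you needing $\R^1\lim_n H^{i-1}(M\otimes^L_A A/f^{n+1})=0$, which (as you correctly observe) is equivalent to Mittag--Leffler for the tower $\{H^i(M)[f^{n+1}]\}$ under multiplication by $f$.  The gap is your claim that this follows from bounded $f^\infty$-torsion in $\calO_Y$: that hypothesis controls torsion in the \emph{sheaf} $\calO_Y$, but says nothing about the $f$-torsion in the \emph{cohomology} groups $H^i(Y,F)$, which can easily be unbounded (e.g.\ $H^2(V,\calO_V)$ for $A$ of depth $3$ but not $S_3$, as in the remark following the characteristic $0$ theorem).  So the bounded-torsion hypothesis is doing real work only in the pro-isomorphism step, not here.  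The canonical-filtration argument sidesteps this entirely; strictly speaking it produces $H^0\big((H^i(Y,F))^\wedge_f\big)$ rather than $\lim H^i(Y,F)/f^n$ on the left, but in every application in the paper the relevant $H^i$ is $f$-torsion-free, so the distinction evaporates.
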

\begin{proof}
We first give a proof when $\calO_Y$ has no $f$-torsion (which will be the only relevant case in the sequel). The first sequence is then obtained from the formula
\[ \R\Gamma(\widehat{Y},\widehat{F}) \simeq \R\lim \R\Gamma(Y,F \otimes_{\calO_Y}^L \calO_{Y_n})\]
and Milnor's exact sequence for $\R\lim$. Applying the projection formula (since $A/f^n$ is $A$-perfect) to the above gives
\[ \R\Gamma(\widehat{Y},\widehat{F}) \simeq \R\lim \big(\R\Gamma(Y,F) \otimes_A^L A/f^n). \]
The second sequence is now obtained by applying the derived $f$-adic completion functor $\R\lim(- \otimes_A^L A/f^n)$ to the canonical filtration on $\R\Gamma(Y,F)$, which proves the claim. In general, the boundedness of $f$-torsion in $\calO_Y$ shows that the map $\{\calO_Y \stackrel{f^n}{\to} \calO_Y\} \to \{\calO_{Y_n}\}$ of projective systems is a (strict) pro-isomorphism, and hence $\{F \stackrel{f^n}{\to} F\} \to \{F \otimes^L_{\calO_Y} \calO_{Y_n}\}$ is also a pro-isomorphism. Now the previous argument applies.
\end{proof}

The following conditions on the data $(A,f)$ will be assumed throughout this subsection; we do {\em not} assume $A$ is noetherian as this will not be true in applications.

\begin{assumption}
	\label{ass:formalgeomassumptions}
	Assume that the data from Notation \ref{not:abstractsetup} satisfies the following:
	\begin{itemize}
		\item $X$ is integral, i.e., $A$ is a domain.
		\item $j:V \hookrightarrow X$ is a quasi-compact open immersion, i.e., $\fram$ is the radical of a finitely generated ideal.
		\item $H^0(V,\calO_V)$ is a finite $A$-module.
		\item $f^N \cdot H^1(V,\calO_V) = 0$ for $N \gg 0$.
	\end{itemize}
\end{assumption}

\begin{example}
	Any $S_2$ noetherian local domain $(A,\fram)$ of dimension $\geq 3$ admitting a dualising complex satisfies Assumption \ref{ass:formalgeomassumptions}: the $A$-module $H^2_\fram(A) \simeq H^1(V,\calO_V)$ has finite length (see \cite[Corollary VIII.2.3]{SGA2}), while $H^0(V,\calO_V) \simeq A$ as $A$ is $S_2$. The absolute integral closure of a {\em complete} noetherian local domain  of dimension $\geq 3$ in characteristic $p$ also satisfies these conditions (see Theorem \ref{thm:hhbigcmalg}), and is a key example for the sequel.
\end{example}

We now study formal geometry over $\widehat{V}$. The following elementary bound on the $f^\infty$-torsion of certain cohomology groups will help relate sheaf theory on $\widehat{V}$ to that on $V$.

\begin{lemma}
	\label{lem:notatemod}
	For $E \in \Vect(V)$, one has $f^k \cdot H^1(V,E) = 0$ for $k \gg 0$.
\end{lemma}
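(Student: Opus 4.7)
The strategy is to reduce the desired bound for $H^1(V,E)$ to the hypothesis $f^N H^1(V,\calO_V)=0$ via a global ``generic trivialization'' of $E$.

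Since $A$ is a domain (Assumption \ref{ass:formalgeomassumptions}), $V$ has a generic point $\eta$ at which $E_\eta$ is an $r$-dimensional vector space over $K=\mathrm{Frac}(A)$. As $H^0(V,E)\otimes_A K\cong E_\eta$, any basis of $E_\eta$ lifts, after clearing denominators by a nonzero element of $A$, to $r$ global sections $s_1,\dots,s_r\in H^0(V,E)$ forming a basis at $\eta$. These assemble into a sheaf map $\alpha:\calO_V^{\oplus r}\to E$ that is an isomorphism at $\eta$ and injective (since $E$ is torsion-free), with cokernel $Q$ a quasi-coherent $\calO_V$-module supported on a proper closed subscheme of $V$.

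The resulting long exact sequence
\[
H^1(V,\calO_V)^{\oplus r}\longrightarrow H^1(V,E)\longrightarrow H^1(V,Q)
\]
together with Assumption \ref{ass:formalgeomassumptions} reduces the problem to bounding the $f$-torsion of $H^1(V,Q)$. I would handle this either by iterating the construction with $Q$ in place of $E$ (using that $Q$ has strictly smaller-dimensional support than $E$), or by choosing the basis of $E_\eta$ so that $\alpha$ is an isomorphism away from the vanishing locus of $f$ on $V$, thereby forcing $Q$ to be $f^\infty$-torsion as an $\calO_V$-module.

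The main obstacle is making either variant uniform. In the iterative approach, the statement must be extended from vector bundles to arbitrary coherent $\calO_V$-modules, requiring a separate dimension-of-support induction. In the direct approach, the basis of $E_\eta$ must be chosen so that $\alpha$ becomes an isomorphism over the open $D(f)\cap V$, which may fail if the restriction of $E$ there is nontrivial. Additional care is needed because $A$ is not assumed noetherian in the applications of interest.
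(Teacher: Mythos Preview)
Your setup via a single generic trivialization is natural, and the long exact sequence does reduce the problem to controlling $H^1(V,Q)$. But the obstacles you identify are genuine and neither of your proposed workarounds closes the gap. The iterative variant would require the statement for non-locally-free finitely presented sheaves, where the hypothesis $f^N H^1(V,\calO_V)=0$ gives no direct leverage; and in the non-noetherian setting there is no well-behaved dimension-of-support induction to organize the recursion. The direct variant asks that $E|_{V\cap D(f)}$ be trivial, which is simply not part of the data.

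The paper's argument repairs exactly this by replacing your \emph{single} generic trivialization with a \emph{family} of local ones, and recording their output in the annihilator ideal $\fram' := \Ann_A\big(f^N H^1(V,E)\big)$. For each prime $\frap \in V$ one trivializes $E$ on a principal open $D(g)$ with $g \notin \frap$; clearing denominators (using $H^0(D(g),E)=H^0(V,E)[1/g]$ for $V$ quasi-compact quasi-separated) yields an exact sequence $0\to \calO_V^{\oplus r}\to E\to Q\to 0$ with $g^n Q=0$ for some $n$, the uniform $n$ coming from quasi-compactness. The long exact sequence then gives $g^n H^1(V,E)\subset \im H^1(V,\calO_V)^{\oplus r}$, hence $f^N g^n H^1(V,E)=0$, i.e.\ $g^n\in\fram'$ and $\fram'\not\subset\frap$. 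As $\frap\in V$ was arbitrary, the only prime of $A$ containing $\fram'$ is $\fram$, so $A/\fram'$ is a local ring with a unique prime; thus $f\in\fram$ is nilpotent modulo $\fram'$, giving $f^m\in\fram'$ and $f^{N+m}H^1(V,E)=0$. No noetherian hypothesis is needed anywhere: only the quasi-compactness of $V$ and the elementary fact that in a ring with a single prime every non-unit is nilpotent.
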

\begin{proof}
Fix an $N$ with $f^N \cdot H^1(V,\calO_V) = 0$, and set $\fram' := \Ann_A(f^N \cdot H^1(V,E)) \subset \fram$. For each $\frap \in V \subset \Spec(A)$, there is a $g \in \fram - \frap$ and an isomorphism $E|_{D(g)} \simeq (\calO_V^{\oplus r})|_{D(g)}$. Clearing denominators gives an exact sequence
	\[ 1 \to \calO_V^{\oplus r} \to E \to Q \to 1\]
with $g^n \cdot Q = 0$ for some $n > 0$ (by quasi-compactness). Then $g^n \in \fram'$, so $\fram' \not\subset \frap$. Varying over all $\frap \in V$ shows that $A/\fram'$ is a local ring with a unique prime ideal $\fram/\fram'$, so $f^m \in \fram'$ for $m \gg 0$, and hence $f^{N+m} \cdot H^1(V,E) = 0$.
\end{proof}

We can now algebraically approximate formal sections of vector bundles on $V$:

\begin{lemma}
	\label{lem:sectcomplete}
For $E \in \Vect(V)$, one has $\widehat{H^0(V,E)} \simeq H^0(\widehat{V},\widehat{E})$.
\end{lemma}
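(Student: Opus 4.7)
The plan is to apply the second exact sequence of the previous lemma (the one identifying $H^i(\widehat{Y}, \widehat{F})$ as an extension of a Tate module by an inverse limit of cohomology quotients) directly to $Y = V$ and $F = E \in \Vect(V)$. The hypothesis of that lemma requires $\calO_V$ to have bounded $f^\infty$-torsion, which holds for free since $A$ is a domain by Assumption \ref{ass:formalgeomassumptions} and hence $\calO_V$ has no $f$-torsion at all. Substituting $i = 0$ yields an exact sequence
\[ 1 \to \lim H^0(V,E)/f^n \to H^0(\widehat{V},\widehat{E}) \to T_f(H^1(V,E)) \to 1. \]

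To conclude, I would kill the Tate module on the right using Lemma \ref{lem:notatemod}: that lemma says $f^k \cdot H^1(V,E) = 0$ for $k \gg 0$, and as remarked in Notation \ref{not:abstractsetup}, the $f$-adic Tate module of any $f^\infty$-torsion module that is annihilated by a power of $f$ vanishes. Thus the right-hand term is trivial, and the left arrow becomes an isomorphism. By the definition of classical $f$-adic completion, $\widehat{H^0(V,E)} = \lim H^0(V,E)/f^n$, yielding the claimed identification $\widehat{H^0(V,E)} \simeq H^0(\widehat{V},\widehat{E})$.

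There is no real obstacle here: the statement is essentially a packaging of the second sequence from the previous lemma together with the torsion bound from Lemma \ref{lem:notatemod}. The only subtle point worth flagging is the (mild) input that Assumption \ref{ass:formalgeomassumptions} entered twice --- once via integrality of $A$ to guarantee that the cohomology-of-completion lemma applies, and once more indirectly through Lemma \ref{lem:notatemod}, whose proof used the bounded torsion hypothesis $f^N \cdot H^1(V,\calO_V) = 0$ to propagate vanishing to $H^1(V,E)$ for arbitrary $E \in \Vect(V)$.
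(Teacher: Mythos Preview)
Your proposal is correct and follows the same route as the paper: apply the second exact sequence of the preceding lemma with $i=0$, then kill $T_f(H^1(V,E))$ via Lemma~\ref{lem:notatemod}. The only nuance is that in the paper's conventions the hat denotes \emph{derived} $f$-adic completion, so one should note (as the paper does) that $\widehat{H^0(V,E)}$ is concentrated in degree $0$ because $f$ is a non-zero divisor on $H^0(V,E)\subset H^0(V,\calO_V)^{\oplus r}$; this is exactly what justifies your identification $\widehat{H^0(V,E)} = \lim H^0(V,E)/f^n$.
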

\begin{proof}
	Lemma \ref{lem:notatemod} shows that $\{H^1(V,E)[f^n]\}$ is essentially $0$, so $T_f(H^1(V,E)) = 0$. It remains to observe that $\widehat{H^0(V,E)} \simeq \pi_0(\widehat{H^0(V,E)})$ since $f$ is a non-zero divisor on $H^0(V,E)$.
\end{proof}

One can also prove the following Lefschetz-type result for $\pi_1$:

\begin{corollary}
	\label{cor:lefschetzpi1}
The natural map $\pi_{1,\et}(V_0) \to \pi_{1,\et}(V)$ is surjective if $A$ is noetherian and $f$-adically complete.
\end{corollary}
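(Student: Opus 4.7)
The plan is to use the standard Galois-categorical reformulation: $\pi_{1,\et}(V_0) \to \pi_{1,\et}(V)$ is surjective if and only if, for every connected finite \'etale cover $\pi : W \to V$, the base change $W_0 := W \times_V V_0$ is connected. So I would fix such a $\pi$ and prove the connectedness of $W_0$ by passing to the formal completion $\widehat V$.

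Since $W_n := W \times_V V_n$ has the same underlying topological space as $W_0$ for every $n$, so does the formal completion $\widehat W := W \times_V \widehat V$. Hence $W_0$ is connected if and only if $\widehat W$ is connected (as a formal scheme), equivalently, if and only if $H^0(\widehat W, \calO_{\widehat W})$ has only the trivial idempotents. Since $\pi$ is finite, $H^0(\widehat W, \calO_{\widehat W}) = H^0(\widehat V, \widehat E)$ where $E := \pi_* \calO_W \in \Vect(V)$, and by Lemma \ref{lem:sectcomplete} this equals the $f$-adic completion $\widehat B$ of the ring $B := H^0(V, E) = H^0(W, \calO_W)$.

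Since $W$ is connected, $B$ has only the trivial idempotents, and I want to transfer this to $\widehat B$. The cleanest route is to show that $B$ is a finite $A$-algebra: the $f$-adic completeness of the noetherian ring $A$ then forces $B$ itself to be $f$-adically complete, so $\widehat B = B$, completing the argument.

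The main obstacle is exactly this finiteness of $B$ over $A$. Under Assumption \ref{ass:formalgeomassumptions} together with the $S_2$-type niceness of $A$ present in the intended applications (e.g., when $A$ is normal of dimension $\geq 3$, as in Theorem \ref{mainthm:intro}), it should follow from the coherence of $j_* E$ on $X = \Spec(A)$ for $j : V \hookrightarrow X$, whose global sections compute $B$. Everything else is a formal consequence of Lemma \ref{lem:sectcomplete} and basic manipulation of idempotents under adic completion.
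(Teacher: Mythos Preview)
Your proposal is correct and follows essentially the same route as the paper: reformulate surjectivity as a statement about idempotents, push forward along the finite \'etale cover to obtain a vector bundle $E=\pi_*\calO_W$ on $V$, invoke Lemma~\ref{lem:sectcomplete} to identify $H^0(\widehat V,\widehat E)$ with the $f$-adic completion of $B=H^0(V,E)$, and then use $f$-adic completeness of $A$ to conclude $B=\widehat B$, so that idempotents match along $B\to H^0(V_0,\calA_0)$. The paper's proof is the same chain of identifications, phrased as $H^0(V,\calA)\simeq\widehat{H^0(V,\calA)}\simeq H^0(\widehat V,\widehat\calA)\simeq\lim H^0(V_n,\calA_n)$.

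One comment: your ``main obstacle'' is not actually an obstacle under the stated hypotheses, and you do not need to import extra $S_2$-type assumptions from the applications. With $A$ noetherian and $H^0(V,\calO_V)$ finite over $A$ (part of Assumption~\ref{ass:formalgeomassumptions}), any $E\in\Vect(V)$ embeds into $\calO_V^{\oplus r}$: dualise the ``clearing denominators'' construction from Lemma~\ref{lem:notatemod} (apply it to $E^*$ and take $\Hom(-,\calO_V)$, noting that $\Hom(Q,\calO_V)=0$ since $Q$ is torsion and $\calO_V$ is torsion-free). Then $H^0(V,E)\hookrightarrow H^0(V,\calO_V)^{\oplus r}$ is a submodule of a finite $A$-module, hence finite. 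This is presumably what the paper is gesturing at with the phrase ``by the noetherian assumption''.
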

\begin{proof}
We want $\pi_0(W) \simeq \pi_0(W_0)$ for any finite \'etale cover $W \to V$.  If $\calA$ is a finite flat quasi-coherent $\calO_V$-algebra,  then $H^0(V,\calA) \simeq \widehat{H^0(V,\calA)} \simeq H^0(\widehat{V},\widehat{\calA}) \simeq \lim H^0(V_n,\calA_n)$ by the noetherian assumption and Lemma \ref{lem:sectcomplete}. Hence, if $\calO_V \to \calA$ is also \'etale, then $H^0(V,\calA) \to H^0(V_n,\calA_n) \to H^0(V_0,\calA_0)$ induce bijections on idempotents.
\end{proof}

Next, we show that pullback along $\widehat{V} \to V$ is faithful on line bundles.

\begin{lemma}
	\label{lem:piccomplete}
	The natural map $\Pic(V) \to \Pic(\widehat{V})$ is injective.
\end{lemma}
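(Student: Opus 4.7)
The plan is to use the trivialization of $\widehat L$ to produce mutually inverse algebraic sections of $L$ and $L^{-1}$ on $V$, making $L$ trivial.

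First, the trivialization $\widehat L \simeq \calO_{\widehat V}$ unpacks as a pair of dual sections $\alpha \in H^0(\widehat V, \widehat L)$ and $\beta \in H^0(\widehat V, \widehat{L^{-1}})$ with $\alpha \beta = 1$ in $H^0(\widehat V, \calO_{\widehat V})$. Applying Lemma \ref{lem:sectcomplete} to $L$, $L^{-1}$, and $\calO_V$, I identify $\alpha$, $\beta$, and $1$ with elements of the classical $f$-adic completions of the $B$-modules $M := H^0(V, L)$, $N := H^0(V, L^{-1})$, and $B := H^0(V, \calO_V)$, respectively; these are the classical completions since the Tate module correction vanishes by Lemma \ref{lem:notatemod} and Assumption \ref{ass:formalgeomassumptions}.

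Next I would algebraically approximate modulo $f$. The standard identification $\widehat M / f \widehat M \simeq M/fM$ (and its analogues for $N$ and $B$) lets me lift the mod-$f$ classes of $\alpha$ and $\beta$ to $s \in M$ and $t \in N$, so that $\widehat s - \alpha \in f \widehat M$ and $\widehat t - \beta \in f \widehat N$. The product $st \in B$ then satisfies $\widehat{st} = \widehat s \cdot \widehat t \equiv \alpha \beta = 1 \pmod{f \widehat B}$; via $B/fB \simeq \widehat B/f\widehat B$ this descends to $st \equiv 1 \pmod{fB}$.

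The final step invokes the local character of $A$: by Assumption \ref{ass:formalgeomassumptions}, $B$ is a finite (hence integral) $A$-module, so every maximal ideal of $B$ lies over $\fram$. Thus $f$ sits in the Jacobson radical of $B$, and $1 + fB \subset B^\times$. Consequently $u := st$ is a unit in $B$, and the pair $(s, u^{-1} t)$ has product $1 \in B$, exhibiting $s : \calO_V \to L$ as an isomorphism. The substantive input is Lemma \ref{lem:sectcomplete}; once formal sections algebraize, the rest is bookkeeping with completions combined with a standard Jacobson-radical argument, and I do not anticipate a genuine obstacle.
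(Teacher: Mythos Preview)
Your proof is correct and follows essentially the same strategy as the paper: both arguments algebraise a formal trivialisation via Lemma~\ref{lem:sectcomplete} and finish with a Nakayama-type step exploiting that $f$ lies in the Jacobson radical. The only difference is in the endgame. The paper lifts a single section $s:L\to\calO_V$ with $s|_{V_0}$ an isomorphism, then studies $Q=\coker(s)$: since $f$ acts invertibly on $Q$, the module $H^0(V,Q)$ is $f$-divisible, and a second appeal to Lemma~\ref{lem:notatemod} (for $H^1(V,L)$) forces $H^0(V,Q)$ to be finitely generated, whence $Q=0$ by Nakayama and the ampleness of $\calO_V$. Your symmetric variant, lifting sections of both $L$ and $L^{-1}$ and checking that their product is a unit in the semilocal ring $B=H^0(V,\calO_V)$, is slightly more economical in that it avoids the cokernel analysis and the ampleness remark; on the other hand, the paper's asymmetric argument adapts directly to higher-rank bundles (see the remark following the lemma), where no inverse section is available.
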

\begin{proof}
Fix an $L \in \ker(\Pic(V) \to \Pic(V_0))$.  Lemma \ref{lem:sectcomplete} gives an injective map $s:L \to \calO_V $ with $s|_{V_0}$  an isomorphism.  Hence, if $Q = \coker(s)$, then multiplication by $f$ is an isomorphism on $Q$, so $H^0(V,Q)$ is uniquely $f$-divisible. Lemma \ref{lem:notatemod} shows $f^N \cdot H^1(V,L) = 0$ for $N \gg 0$, so $H^0(V,\calO_V) \to H^0(V,Q)$ is surjective, and hence  $H^0(V,Q)$ is a finitely generated $f$-divisible $A$-module. By Nakayama, $H^0(V,Q) = 0$, so $Q = 0$ as $\calO_V$ is ample.
\end{proof}

\begin{remark}
	The same argument shows $\Vect(V) \to \Vect(\widehat{V})$ is injective on isomorphism classes. If $V_0$ is $S_2$, then one can show that each $\widehat{E} \in \Vect(\widehat{V})$ algebraises to some torsion free $E \in \Coh(V)$ (see \cite[Theorem IX.2.2]{SGA2}); examples such as \cite[Example12]{KollarLefschetz} show that $E$ need not be a vector bundle, even in the rank $1$ case.
\end{remark}

The next observation is a manifestation of the formula $\widehat{V} = \colim_n V_n$ and some bookkeeping of automorphisms:

\begin{lemma}
	\label{lem:piccont}
	The natural map $\Pic(\widehat{V}) \to \lim \Pic(V_n)$ is bijective.
\end{lemma}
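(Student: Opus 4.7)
The plan is to unravel $\Pic(\widehat V) = \pi_0(\lim_n \underline{\Pic}(V_n))$ and compare it with $\lim_n \pi_0(\underline{\Pic}(V_n)) = \lim_n \Pic(V_n)$, verifying surjectivity and injectivity of the natural forgetful map separately. Surjectivity is nearly tautological: given $(\xi_n) \in \lim_n \Pic(V_n)$, I pick a representative $L_n \in \underline{\Pic}(V_n)$ of each $\xi_n$ and an isomorphism $\alpha_n : L_{n+1}|_{V_n} \xrightarrow{\sim} L_n$, which exists because $\xi_{n+1}|_{V_n} = \xi_n$; the tuple $(L_n, \alpha_n)$ is an object of $\lim_n \underline{\Pic}(V_n) = \underline{\Pic}(\widehat V)$ whose image in $\lim_n \Pic(V_n)$ is $(\xi_n)$.

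For injectivity, tensoring reduces the problem to showing that any formal line bundle $M = (M_n, \gamma_n) \in \underline{\Pic}(\widehat V)$ with each $M_n \simeq \calO_{V_n}$ is isomorphic to $\calO_{\widehat V}$ in $\underline{\Pic}(\widehat V)$. Choosing arbitrary trivializations $t_n : M_n \xrightarrow{\sim} \calO_{V_n}$, the failure of the $t_n$ to be compatible with the transition maps $\gamma_n$ is recorded by units $u_n \in \Aut(\calO_{V_n}) = H^0(V_n, \calO_{V_n})^*$, and the obstruction to adjusting the $t_n$ into a compatible family is the class of $(u_n)$ in $\R^1\lim_n H^0(V_n, \calO_{V_n})^*$. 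Thus it suffices to prove
\[ \R^1\lim_n H^0(V_n, \calO_{V_n})^* = 0. \]

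To this end, I would first establish the Mittag-Leffler property for the additive tower $(H^0(V_n, \calO_{V_n}))_n$. The long exact sequence attached to $0 \to \calO_V \xrightarrow{f^{n+1}} \calO_V \to \calO_{V_n} \to 0$ on $V$ produces the short exact sequence
\[ 0 \to H^0(V, \calO_V)/f^{n+1} \to H^0(V_n, \calO_{V_n}) \to H^1(V, \calO_V)[f^{n+1}] \to 0. \]
By Assumption \ref{ass:formalgeomassumptions}, $H^1(V, \calO_V)$ is annihilated by $f^N$, so the rightmost term stabilizes to $H^1(V, \calO_V)$ for $n \geq N-1$; comparing the defining resolutions for successive $n$ identifies the transition map on this quotient with multiplication by $f$, hence it vanishes after $N$ iterations. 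Consequently, for $m \geq n + N$ the image of $H^0(V_m, \calO_{V_m}) \to H^0(V_n, \calO_{V_n})$ is exactly $H^0(V, \calO_V)/f^{n+1}$, establishing the Mittag-Leffler condition for the additive tower. Finally, since the transition kernels lie in the nilpotent ideal $(f) \subset H^0(V_n, \calO_{V_n})$ and hence in its Jacobson radical, any lift of a unit is automatically a unit; this transfers Mittag-Leffler from the additive to the multiplicative tower $(H^0(V_n, \calO_{V_n})^*)_n$ and yields the required $\R^1\lim$ vanishing. The only delicate step is identifying the transition map on $H^1(V, \calO_V)[f^{n+1}]$ as multiplication by $f$ and exploiting Assumption \ref{ass:formalgeomassumptions} to eventually kill it; everything else is formal bookkeeping.
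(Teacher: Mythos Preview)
Your argument is correct and follows essentially the same route as the paper: reduce bijectivity to the Mittag--Leffler condition for the automorphism tower $\{H^0(V_n,\calO_{V_n})^*\}$, establish ML for the additive tower $\{H^0(V_n,\calO_{V_n})\}$ using that $f^N$ kills $H^1(V,\calO_V)$, and then transfer ML from the additive to the multiplicative tower. The only packaging difference is in this last step: the paper observes $H^0(V_n,\calO_{V_n})^* = H^0(V_n,\calO_{V_n}) \times_{H^0(V_0,\calO_{V_0})} H^0(V_0,\calO_{V_0})^*$ (since $|V_n|=|V_0|$) and invokes a general lemma that ML is stable under base change, whereas you argue directly that any lift of a unit is a unit---these are two phrasings of the same fact.
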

\begin{proof}
	Since $\underline{\Pic}(\widehat{V}) \simeq \lim \underline{\Pic}(V_n)$ as groupoids, it suffices to show $\{\pi_1(\Pic(V_n))\} := \{H^0(V_n,\calO_{V_n}^*)\}$ satisfies the Mittag-Leffler (ML) condition. The assumption on $V$ shows that $\{H^1(V,\calO_V)[f^n]\}$ is essentially $0$, and hence $\{H^0(V_n,\calO_{V_n})\}$ satisfies ML. Since $|V_0| = |V_n|$, we have 
\[ \{H^0(V_n,\calO_{V_n}^*) \}= \{H^0(V_n,\calO_{V_n}) \times_{H^0(V_0,\calO_{V_0})} H^0(V_0,\calO_{V_0}^*)\}\]
as projective systems. The claim now follows from Lemma \ref{lem:mlbasechange}.
\end{proof}

\begin{lemma}
	\label{lem:mlbasechange}
	If $\{X_n\}$ is a projective system of sets that satisfies ML, and $Y_0 \to X_0$ is some map, then the base change system $\{Y_n\} := \{Y_0 \times_{X_0} X_n\}$ also satisfies ML.
\end{lemma}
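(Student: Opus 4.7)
The plan is to unwind the fibre product defining $Y_n$ and show directly that the image of the transition map $Y_m \to Y_n$ stabilises for $m \gg 0$, as a consequence of the ML hypothesis on $\{X_n\}$. Fix $n \geq 0$. By definition $Y_m = Y_0 \times_{X_0} X_m$, so I view elements of $Y_m$ as compatible pairs $(y_0, x_m)$ with $y_0 \in Y_0$, $x_m \in X_m$, and $y_0, x_m$ having the same image in $X_0$; the transition map $Y_m \to Y_n$ sends $(y_0, x_m)$ to $(y_0, \overline{x}_m)$, where $\overline{x}_m$ denotes the image of $x_m$ in $X_n$. Since the composition $X_m \to X_n \to X_0$ equals the structure map $X_m \to X_0$, the compatibility at level $m$ implies compatibility at level $n$, so the target of this map is indeed $Y_n$.

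Now I compute the image. A pair $(y_0, x_n) \in Y_n$ is hit iff there exists $x_m \in X_m$ mapping to $x_n$ in $X_n$ (the compatibility condition $x_m \mapsto y_0$ in $X_0$ being automatic from $x_n \mapsto y_0$ and the factorisation above). Hence the image of $Y_m \to Y_n$ is exactly $Y_0 \times_{X_0} \mathrm{im}(X_m \to X_n) \subseteq Y_n$. By the ML hypothesis, $\mathrm{im}(X_m \to X_n) \subseteq X_n$ stabilises for $m \gg 0$; base-changing this stable subset along $Y_0 \to X_0$ yields the desired stable image in $Y_n$, proving ML for $\{Y_n\}$. There is no real obstacle here: the whole statement is a formal manipulation of the fibre product, and in particular requires no hypothesis on the map $Y_0 \to X_0$ beyond what is stated.
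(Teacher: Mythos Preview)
Your proof is correct and follows essentially the same approach as the paper: both compute that $\im(Y_m \to Y_n) = Y_0 \times_{X_0} \im(X_m \to X_n)$, from which ML for $\{Y_n\}$ is immediate from ML for $\{X_n\}$. Your version is somewhat more explicit in unpacking the fibre product, but the argument is identical.
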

\begin{proof}
	Let $Z_{n,k} \subset X_k$ be the image of $X_n \to X_k$ for any $k \leq n$. The assumption says: for fixed $k$, one has $Z_{n,k} = Z_{n+1,k}$ for $n \gg 0$.  Since $\im(X_n \times_{X_0} Y_0 \to X_k \times_{X_0} Y_0)  = Z_{n,k} \times_{X_0} Y_0$, the claim follows.
\end{proof}

We quickly recall the standard deformation-theoretic approach to studying line bundles on $\widehat{V}$:

\begin{lemma}
	\label{lem:piconestep}
The map $\Pic(V_{n+1}) \to \Pic(V_n)$ is injective if $H^1(V_0,\calO_{V_0}) = 0$, and surjective if $H^2(V_0,\calO_{V_0}) = 0$.
\end{lemma}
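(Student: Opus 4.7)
The plan is to apply the standard deformation-theoretic exponential sequence relating the units on $V_{n+1}$ to those on $V_n$. Since $f$ is a regular element and $V_n$, $V_{n+1}$, $V_0$ all share the same underlying topological space, I would first identify the square-zero ideal $I := \ker(\calO_{V_{n+1}} \twoheadrightarrow \calO_{V_n}) = f^{n+1}\calO_{V_{n+1}}$ with $\calO_{V_0}$ via multiplication by $f^{n+1}$; that $f$ is a non-zero divisor on $\calO_V$ gives the required isomorphism, and $I^2 = 0$ because $2(n+1) \geq n+2$.

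Next, I would write down the truncated exponential short exact sequence of sheaves of abelian groups on $|V_0| = |V_n| = |V_{n+1}|$,
\[ 1 \to \calO_{V_0} \xrightarrow{x \mapsto 1 + f^{n+1}\widetilde{x}} \calO_{V_{n+1}}^* \to \calO_{V_n}^* \to 1, \]
where $\widetilde{x}$ denotes an arbitrary lift to $\calO_{V_{n+1}}$ (the map is well-defined and a group homomorphism because $I^2 = 0$). Taking cohomology and using $\Pic(V_k) = H^1(|V_0|, \calO_{V_k}^*)$ yields an exact sequence
\[ H^1(V_0, \calO_{V_0}) \to \Pic(V_{n+1}) \to \Pic(V_n) \to H^2(V_0, \calO_{V_0}), \]
from which both injectivity and surjectivity claims fall out immediately.

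Since every step is formal once the identification $I \simeq \calO_{V_0}$ is in place, there is no real obstacle; the only point to be mildly careful about is that the exponential map is well-defined on sheaves of units (requiring $I^2=0$) and that $\Pic$ is computed by $H^1$ of units on the common underlying space.
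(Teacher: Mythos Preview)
Your proposal is correct and is exactly the argument the paper has in mind: the paper's one-line proof invokes precisely the exact sequence $1 \to \calO_{V_0} \stackrel{a}{\to} \calO_{V_{n+1}}^* \to \calO_{V_n}^* \to 1$ with $a(g) = 1 + g f^{n+1}$ (the paper writes $f^n$, evidently a typo given its indexing $V_n = V \bmod f^{n+1}$). Your added remarks on why $I \simeq \calO_{V_0}$ and $I^2 = 0$ simply spell out what the paper leaves implicit.
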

\begin{proof}
	Standard using the exact sequence $1 \to \calO_{V_0} \stackrel{a}{\to} \calO_{V_{n+1}}^* \to \calO_{V_n}^* \to 1$ where $a(g) = 1 + g \cdot f^n$.
\end{proof}

We end by summarising the relevant consequences of the preceding discussion:

\begin{corollary} 
	\label{cor:resinjcrit} For $A$ satisfying Assumption \ref{ass:formalgeomassumptions}, we have:
	\begin{enumerate}
		\item The map $\Pic(V) \to \Pic(\widehat{V})$ is injective.
		\item The map $\Pic(\widehat{V}) \to \lim \Pic(V_n)$ is bijective. 
		\item The map $\Pic(\widehat{V}) \to \Pic(V_0)$ is injective if $H^1(V_0,\calO_{V_0}) = 0$.
	\end{enumerate}
\end{corollary}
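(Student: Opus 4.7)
The plan is to simply assemble the preceding lemmas, since the corollary is a summary statement.

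For part (1), the assertion is literally Lemma \ref{lem:piccomplete}, so I would just cite it. Note that the hypotheses for that lemma are exactly Assumption \ref{ass:formalgeomassumptions}, which are in force.

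For part (2), the assertion is literally Lemma \ref{lem:piccont}, again an immediate citation. The only thing to check is that the Mittag--Leffler hypothesis used there followed from Assumption \ref{ass:formalgeomassumptions}, which it did (via the vanishing of $f^\infty$-torsion in $H^1(V,\calO_V)$).

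For part (3), I would combine part (2) with Lemma \ref{lem:piconestep}. Under the hypothesis $H^1(V_0,\calO_{V_0}) = 0$, Lemma \ref{lem:piconestep} shows that every transition map $\Pic(V_{n+1}) \to \Pic(V_n)$ in the inverse system is injective. A cofinal composition of injective maps in a projective system of groups has injective projection to any stage, so the map $\lim \Pic(V_n) \to \Pic(V_0)$ is injective. Composing with the bijection in part (2) gives the desired injectivity of $\Pic(\widehat V) \to \Pic(V_0)$.

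There is no real obstacle here; this is a bookkeeping corollary that packages the three main technical lemmas of the subsection into the exact form needed in \S \ref{subsec:locpiccharp}. The only substantive remark is that part (3) crucially uses only the injectivity half of Lemma \ref{lem:piconestep}, so the vanishing of $H^2(V_0,\calO_{V_0})$ is not required, which is what makes the statement useful when we later replace $A$ with a large extension $\overline{A}$ whose $H^1$ vanishes but whose $H^2$ may not.
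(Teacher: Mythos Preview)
Your proposal is correct and matches the paper's proof exactly: the paper simply writes ``We simply combine lemmas \ref{lem:piccomplete}, \ref{lem:piccont}, and \ref{lem:piconestep},'' and your slightly more detailed unpacking of part (3) (injectivity of each $\Pic(V_{n+1}) \to \Pic(V_n)$ forces injectivity of $\lim \Pic(V_n) \to \Pic(V_0)$, then compose with (2)) is the intended reading.
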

\begin{proof}
We simply combine lemmas \ref{lem:piccomplete}, \ref{lem:piccont}, and \ref{lem:piconestep}.
\end{proof}

\subsection{Characteristic $p$}
\label{subsec:locpiccharp}

We follow Notation \ref{not:abstractsetup}. Our goal is to prove the following:

\begin{theorem}
	\label{mainthm:charp}
Fix an excellent normal local $\F_p$-algebra $(A,\fram)$ of dimension $\geq 4$, and some $0 \neq f \in \fram$. Then the kernel of $\Pic(V) \to \Pic(V_0)$ is $p^\infty$-torsion.
\end{theorem}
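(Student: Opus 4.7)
The plan is to apply Corollary \ref{cor:resinjcrit} not to $A$ itself but to the absolute integral closure $\overline{A}$ of $A$, where the Hochster-Huneke big Cohen-Macaulay algebra theorem \cite{HHBigCM} automatically furnishes the depth estimates that corollary needs. The resulting injectivity over $\overline{A}$ then descends to $A$ at the unavoidable cost of $p^\infty$-torsion.

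After reducing to the case $A$ complete via faithful flatness of $A \to \widehat{A}$, let $\overline{A}$ be the absolute integral closure of $A$, written as the filtered colimit $\overline{A} = \colim_B B$ over finite normal $A$-subalgebras $B$. By Hochster-Huneke, every $A$-system of parameters is a regular sequence on $\overline{A}$; since $\dim A \geq 4$, extending $f$ to such a system gives $H^i_{\overline{\fram}}(\overline{A}) = 0$ for $i \leq 3$ and $H^i_{\overline{\fram}}(\overline{A}/f) = 0$ for $i \leq 2$. These vanishings verify Assumption \ref{ass:formalgeomassumptions} for $(\overline{A},f)$ together with the hypothesis $H^1(V_0^{\overline{A}},\calO) = 0$ of Corollary \ref{cor:resinjcrit}(3); combining parts (1) and (3) of that corollary, the restriction map $\Pic(V^{\overline{A}}) \to \Pic(V_0^{\overline{A}})$ is injective. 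Any $L \in \ker(\Pic(V) \to \Pic(V_0))$ therefore becomes trivial on $V^{\overline{A}}$, and by finite presentation already on $V^B$ for some finite normal extension $A \subset B \subset \overline{A}$.

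For the descent, enlarge $B$ inside $\overline{A}$ so that $A \subset B_0 \subset B$ with $B_0/A$ finite Galois of group $G$ (containing the Galois closure of the maximal generically separable subextension of $B/A$) and $B/B_0$ purely inseparable of some exponent $e$. The inclusion $B^{p^e} \subset B_0$ shows the Frobenius $\Frob^e: B \to B$ factors through $B_0$, and this factorization formally implies $L^{p^e}|_{V^{B_0}} = 0$. Pick a trivialization $s^{B_0}: L^{p^e}|_{V^{B_0}} \xrightarrow{\sim} \calO_{V^{B_0}}$ whose restriction to $V_0^{B_0}$ equals the pullback of the given trivialization $s_0$ of $L^{p^e}|_{V_0}$; such a lift exists after adjusting by a suitable unit, since $B_0^* \to (B_0/f)^*$ is surjective by completeness of $B_0$. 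The failure of $s^{B_0}$ to $G$-descend to $V$ is then encoded by the cocycle $u_g := g^*(s^{B_0})/s^{B_0} \in Z^1(G, B_0^*)$, which actually takes values in the principal unit subgroup $1 + fB_0$ because $s_0|_{V_0^{B_0}}$ is pulled back from $V_0$ and so is $G$-invariant.

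In characteristic $p$, the series $(1+fx)^{1/q} = \sum_{n \geq 0} \binom{1/q}{n}(fx)^n$ converges $f$-adically in the complete ring $B_0$ for any integer $q$ coprime to $p$, so $1 + fB_0$, and hence also $H^1(G, 1 + fB_0)$, is a $\mathbf{Z}_{(p)}$-module. Since finite-group $H^1$ is annihilated by $|G|$, it follows that $H^1(G, 1 + fB_0)$ is annihilated by the $p$-primary part of $|G|$, hence so is $[u_g]$. Saying $[u_g]$ is $p^k$-torsion means $u_g^{p^k}$ is a coboundary, so the $p^k$-fold tensor power of $s^{B_0}$ descends after adjustment to a trivialization of $L^{p^{e+k}}$ on $V$; thus $L^{p^{e+k}} = 0$ in $\Pic(V)$, making $L$ a $p^\infty$-torsion element. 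The main obstacle is the first step — extracting the depth vanishings over $\overline{A}$ from Hochster-Huneke so that Corollary \ref{cor:resinjcrit} applies; the descent, once organized around the principal-unit subgroup $1 + fB_0$, is then routine.
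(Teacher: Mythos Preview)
The infinite-level part of your argument—completing, passing to $\overline{A}$, invoking Hochster--Huneke to verify Assumption~\ref{ass:formalgeomassumptions} and the hypothesis of Corollary~\ref{cor:resinjcrit}(3), and concluding that $L$ trivialises over some finite normal extension $B$—is correct and is exactly what the paper does (Proposition~\ref{prop:mainthminflevel}).

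The descent step, however, has a gap. You want to adjust the trivialisation $s^{B_0}$ so that its restriction to $V_0^{B_0}$ agrees with the pullback of $s_0$; the obstruction is a unit $v \in H^0(V_0^{B_0},\calO^*)$, and you need to lift it to $B_0^*$. You invoke surjectivity of $B_0^* \to (B_0/f)^*$, but $v$ lives in $H^0(V_0^{B_0},\calO^*)$, which can be strictly larger than $(B_0/f)^*$: there is no reason for $B_0/f$ to have depth $\geq 2$ at its closed point (indeed this is precisely the hypothesis one is trying to avoid). Without this adjustment, the cocycle $(u_g\bmod f)$ in $(B_0/f)^*$ is only known to become a coboundary after passing to the larger group $H^0(V_0^{B_0},\calO^*)$, so you cannot conclude $u_g \in 1+fB_0$, and your $\Z_{(p)}$-module argument does not apply.

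The paper sidesteps this entirely by a shorter route. First, since $V$ is normal and $L$ trivialises on a finite surjective cover, the norm map shows $L$ is torsion in $\Pic(V)$. Second—and this is the idea you are missing—Assumption~\ref{ass:formalgeomassumptions} already holds for the \emph{noetherian} ring $A$ itself (complete normal local of dimension $\geq 3$), so Corollary~\ref{cor:resinjcrit}(1)--(2) embeds $\ker\big(\Pic(V)\to\Pic(V_0)\big)$ into $\ker\big(\lim_n \Pic(V_n) \to \Pic(V_0)\big)$. Each successive kernel $\ker\big(\Pic(V_{n+1})\to\Pic(V_n)\big)$ is an $\F_p$-vector space by Lemma~\ref{lem:piconestep}, so the inverse-limit kernel has no prime-to-$p$ torsion. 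Combining, $L$ is $p^\infty$-torsion. No Galois cocycle computation or control over $B_0/f$ is needed.
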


The rest of \S \ref{subsec:locpiccharp} is dedicated to proving Theorem \ref{mainthm:charp}, so we fix an $(A,\fram,f)$ as in Theorem \ref{mainthm:charp} at the outset. The first reduction is to the complete case:

\begin{lemma}
	\label{lem:reducetocomplete}
	If $\pi:\Spec(R) \to \Spec(A)$ is $\fram$-adic completion of $A$, then $\Pic(V) \to \Pic(\pi^{-1}(V))$ is injective.
\end{lemma}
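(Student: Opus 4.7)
The plan is to realise elements of $\Pic(V)$ as reflexive rank one $A$-modules and then conclude by faithfully flat descent along $A \to R$. Given $L \in \ker(\Pic(V) \to \Pic(\pi^{-1}(V)))$, set $M := H^0(V, L)$. Since $A$ is normal and $\dim A \geq 2$, the pushforward $j_* L$ along $j\colon V \hookrightarrow \Spec(A)$ is a rank one reflexive coherent sheaf on $\Spec(A)$, so $M$ is a finitely generated reflexive $A$-module of rank one (isomorphic to a divisorial ideal), and $\widetilde{M}|_V \simeq L$.

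The key step is to identify $M \otimes_A R$ with $R$. Flat base change along the flat quasi-compact morphism $\pi$ gives $M \otimes_A R \simeq H^0(\pi^{-1}(V), \pi_V^* L)$, where $\pi_V\colon \pi^{-1}(V) \to V$ is the base change of $\pi$. By hypothesis $\pi_V^* L \simeq \calO_{\pi^{-1}(V)}$. Excellence of $A$ forces the formal fibres of $A \to R$ to be geometrically regular, hence $R$ is normal, $S_2$, and has $\dim R \geq 2$, so $H^0(\pi^{-1}(V), \calO_{\pi^{-1}(V)}) = R$. Combining these facts yields $M \otimes_A R \simeq R$ as $R$-modules.

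Faithfully flat descent then finishes the argument. The $R$-module $M \otimes_A R$ is free of rank one, hence flat, and since $A \to R$ is faithfully flat, $M$ is flat over $A$. A finitely generated flat module over a noetherian local ring is free, and the rank must agree with that of $M \otimes_A R$, which is one. Hence $M \simeq A$, and restricting back to $V$ yields $L \simeq \calO_V$.

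The only non-routine ingredient is the normality of $R = \widehat{A}$: without it the identification $H^0(\pi^{-1}(V), \calO_{\pi^{-1}(V)}) = R$ can fail, and the whole argument would collapse at the base change step. Excellence of $A$ is precisely what supplies this, so I do not expect genuine obstacles beyond bookkeeping.
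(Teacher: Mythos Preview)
Your proof is correct and follows essentially the same strategy as the paper: extend $L$ to the reflexive (equivalently, depth $\geq 2$) rank one $A$-module $M = j_*L$, identify $M \otimes_A R \simeq R$ using that $R$ inherits normality from $A$, and then descend. The only cosmetic difference is in the descent step, where you invoke faithfully flat descent of flatness while the paper approximates a generator of $M \otimes_A R \simeq R$ modulo $\fram$ by a map $A \to M$ that is surjective by Nakayama and injective since $A$ is a domain.
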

\begin{proof}
A line bundle $L \in \Pic(V)$ extends to a unique finite $A$-module $M$ with $\depth_\fram(M) \geq 2$, and similarly for line bundles on $\Pic(\pi^{-1}(V))$. Since $\pi^*:\Mod^f_A \to \Mod^f_R$ preserves depth, it suffices to prove: if $M \in \Mod^f_A$ with $M \otimes_A R \simeq R$, then $M \simeq A$. For this, we simply observe that an isomorphism $R \simeq M \otimes_A R$ can be approximated modulo $\fram$ by a map $A \to M$ which is injective (since $A$ is a domain) and surjective by Nakayama, so $M \simeq A$.
\end{proof}

By Lemma \ref{lem:reducetocomplete} and the preservation of normality under completion of excellence rings, to prove Theorem \ref{mainthm:charp}, we can (and do) assume $A$ is an $\fram$-adically complete noetherian local normal ring. To proceed further, we define:

\begin{notation}
Let $\overline{A}$ denote a fixed absolute integral closure of $A$. For any $A$-scheme $Y$, we write $\overline{Y} := Y_{\overline{A}}$.
\end{notation}

Our strategy for proving Theorem \ref{mainthm:charp} is to first prove $\Pic(\overline{V}) \to \Pic(\overline{V}_0)$ is injective, and then descend to a finite level conclusion via norms. The situation over $\overline{V}$ is analysed via the formal geometry of \S \ref{sec:formalgeom}. The reason we work at the infinite level first is that formal geometry is easier over $\overline{V}$ than over $V$, thanks to the following vanishing result:

\begin{theorem}
	\label{thm:hhbigcmalg}
$\overline{A}$ is Cohen-Macaulay, i.e., $H^i_{\fram}(\overline{A}) = 0$ for $i < \dim(A)$.
\end{theorem}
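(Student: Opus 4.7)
The plan is to sketch the theorem of Hochster--Huneke \cite{HHBigCM}, which is equivalent to the stronger statement that any system of parameters $x_1, \ldots, x_d$ for $A$ forms a regular sequence on $\overline{A}$; the desired vanishing $H^i_\fram(\overline{A}) = 0$ for $i < d$ then follows by computing local cohomology via the stable Koszul complex on $x_1, \ldots, x_d$. Since $\overline{A} = \colim_B B$ is a filtered colimit of finite normal extensions $A \subset B \subset \overline{A}$, and local cohomology commutes with filtered colimits, the claim reduces to the following finite-level statement: given such a $B$ and a relation $r \cdot x_{k+1} = \sum_{j=1}^{k} r_j x_j$ with $r, r_j \in B$, there exists a further finite normal extension $B \subset C \subset \overline{A}$ in which $r$ lies in $(x_1, \ldots, x_k) C$.

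The essential characteristic-$p$ input is the theory of tight closure. A Noether normalization provides a regular local subring $R = k[[x_1, \ldots, x_d]] \subseteq B$ with $B$ module-finite over $R$. The colon-capturing property of tight closure for such module-finite extensions (a consequence of the existence of test elements for complete local domains) forces the class of $r$ to lie in $(x_1, \ldots, x_k)^* / (x_1, \ldots, x_k)$ inside $B$, where $(-)^*$ denotes tight closure. One then invokes K. Smith's theorem identifying tight closure with plus closure for parameter ideals in a complete local domain of characteristic $p$: the tight closure $(x_1, \ldots, x_k)^*$ coincides with $(x_1, \ldots, x_k) B^+ \cap B$. Consequently, $r$ becomes a $C$-linear combination of $x_1, \ldots, x_k$ after passing to a suitable finite normal extension $C \subset \overline{A}$.

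The main obstacle is proving the two tight-closure inputs just invoked: the existence of test elements, which rests on $\Ext$-vanishing arguments and reduction to the Gorenstein case; and Smith's identification of tight closure with plus closure for parameter ideals, which uses Frobenius closure together with structural results on the canonical module. Once these ingredients are in place, the assembly into the Cohen--Macaulayness of $\overline{A}$ is essentially formal: Frobenius and the ability to extract $p^e$-th roots of arbitrary elements inside $\overline{A}$ are what turn tight-closure containments into genuine equalities within a finite extension, and the preservation of normality of $A$ under $\fram$-adic completion (used implicitly in the reduction to the complete case before this theorem is invoked) ensures that the Noether normalization and the tight-closure machinery are available.
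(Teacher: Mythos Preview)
The paper does not prove this theorem; it attributes the result to Hochster--Huneke \cite{HHBigCM} and cites \cite[Corollary 2.3]{HLBigCM} for the form stated, with no further argument.

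Your sketch is nonetheless a mathematically valid route: reduce to killing a single colon relation at a finite level, use colon-capturing to place the offending element in the tight closure of the partial parameter ideal, and then invoke Smith's identification of tight closure with plus closure for parameter ideals in a complete local domain. This is correct, and Smith's paper indeed recovers the Hochster--Huneke theorem as a corollary along exactly these lines. Two remarks are in order, however. First, what you describe is \emph{not} the argument of \cite{HHBigCM} itself, which predates Smith's work and proceeds by directly annihilating local cohomology classes via an ``equational lemma'' and Frobenius, bypassing the tight closure formalism; so your opening phrase ``sketch the theorem of Hochster--Huneke \cite{HHBigCM}'' is misleading as to provenance---you are sketching Smith's alternative proof. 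Second, the Huneke--Lyubeznik argument the paper also cites is shorter still and avoids both tight closure and Smith's theorem. Your outline is sound, but it packages a substantial body of tight closure theory (test elements, colon-capturing, Smith's theorem) as black boxes; that is acceptable for a sketch, but you should be aware that more direct arguments exist and are what the paper is actually pointing to.
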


\begin{remark}
	Strictly speaking, the local cohomology groups used in Theorem \ref{thm:hhbigcmalg} are defined as the derived functors of sections supported at $\{\fram\} \subset \Spec(A)$ applied to $\overline{A}$. These do not {\em a priori} agree with those arising from the definition adopted in Notation \ref{not:abstractsetup}. However, both approaches to local cohomology commute with filtered colimits. Hence, for both definitions, we have $H^i_\fram(\overline{A}) = \colim H^i_\fram(B)$ where the colimit ranges over finite extensions $A \to B$ contained in $\overline{A}$. By reduction to the noetherian case, the two definitions of $H^i_\fram(\overline{A})$ coincide.
\end{remark}

Theorem \ref{thm:hhbigcmalg} is due to Hochster-Huneke \cite{HHBigCM}, and can be found in \cite[Corollary 2.3]{HLBigCM} in the form above. It implies $H^i(\overline{V},\calO_{\overline{V}}) = 0$ for $0 < i <\dim(A) - 1$, so $H^i(\overline{V}_0,\calO_{\overline{V}_0}) = 0$ for $0 < i < \dim(A) - 2$. We use this to prove an infinite level version of Theorem \ref{mainthm:charp}:

\begin{proposition}
	\label{prop:mainthminflevel}
	The map $\Pic(\overline{V}) \to \Pic(\overline{V}_0)$ is injective if $\dim(A) \geq 4$.
\end{proposition}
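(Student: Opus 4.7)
The plan is to reduce this to a straightforward application of Corollary \ref{cor:resinjcrit} with $(A,f)$ replaced by $(\overline{A},f)$, with Theorem \ref{thm:hhbigcmalg} doing essentially all the heavy lifting. The two tasks are (a) to verify Assumption \ref{ass:formalgeomassumptions} for $\overline{A}$, and (b) to verify the vanishing hypothesis $H^1(\overline{V}_0,\calO_{\overline{V}_0}) = 0$ needed for part (3) of the corollary. Both will follow directly from the Cohen-Macaulayness of $\overline{A}$.

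For (a), integrality of $\overline{A}$ is automatic, and the puncture $\overline{V} = \Spec(\overline{A}) - V(\fram\overline{A})$ is quasi-compact since $\fram$ is finitely generated in $A$. Since $\dim(A)\ge 4 \ge 2$, Theorem \ref{thm:hhbigcmalg} gives $H^0_\fram(\overline{A}) = H^1_\fram(\overline{A}) = 0$, so $\overline{A} \simeq H^0(\overline{V},\calO_{\overline{V}})$ and this is certainly a finite $\overline{A}$-module. Similarly, Theorem \ref{thm:hhbigcmalg} with $\dim(A)\ge 3$ yields $H^1(\overline{V},\calO_{\overline{V}}) \simeq H^2_\fram(\overline{A}) = 0$, so the annihilation of $f^N \cdot H^1(\overline{V},\calO_{\overline{V}})$ holds trivially.

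For (b), I would use the short exact sequence $0 \to \overline{A} \xrightarrow{f} \overline{A} \to \overline{A}/f \to 0$ and its long exact sequence in local cohomology. The connecting map identifies $H^i_\fram(\overline{A}/f)$ with a subquotient of $H^{i+1}_\fram(\overline{A})$, which vanishes for $i+1 < \dim(A)$ by Theorem \ref{thm:hhbigcmalg}. In particular, with $\dim(A) \geq 4$ we get $H^2_\fram(\overline{A}/f) = 0$, i.e., $H^1(\overline{V}_0,\calO_{\overline{V}_0}) = 0$.

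Putting these together, Corollary \ref{cor:resinjcrit} shows that both $\Pic(\overline{V}) \to \Pic(\widehat{\overline{V}})$ and $\Pic(\widehat{\overline{V}}) \to \Pic(\overline{V}_0)$ are injective, and their composition is the map in question. I do not anticipate a real obstacle: the only subtle point is that $\overline{A}$ is non-noetherian, but the abstract framework of \S\ref{sec:formalgeom} was built precisely to accommodate this, and the Hochster-Huneke vanishing makes the needed cohomological inputs much stronger than in the noetherian case (the relevant $H^1$'s vanish outright rather than merely being $f^\infty$-torsion), so no bookkeeping with Tate modules is needed here.
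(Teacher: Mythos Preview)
Your proposal is correct and follows exactly the same approach as the paper: the paper's proof is a single sentence invoking Corollary~\ref{cor:resinjcrit} after noting that Theorem~\ref{thm:hhbigcmalg} supplies both Assumption~\ref{ass:formalgeomassumptions} for $\overline{A}$ and the vanishing $H^1(\overline{V}_0,\calO_{\overline{V}_0})=0$ (the paper records the latter consequence in the paragraph immediately following Theorem~\ref{thm:hhbigcmalg}). Your write-up simply unpacks these verifications explicitly.
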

\begin{proof}
This follows from Corollary \ref{cor:resinjcrit} as $\overline{A}$ satisfies the relevant conditions by Theorem \ref{thm:hhbigcmalg} since $\dim(A) \geq 4$.
\end{proof}

We can now descend down to prove the main theorem:

\begin{proof}[Proof of Theorem \ref{mainthm:charp}]
	Fix an $L \in \ker(\Pic(V) \to \Pic(V_0))$.  Proposition \ref{prop:mainthminflevel} shows $L \in \ker(\Pic(V) \to \Pic(\overline{V}))$. By expressing $\overline{A}$ as a filtered colimit of finite extensions, it follows that $L \in \ker(\Pic(V) \to \Pic(W))$ for a finite surjective map $W \to V$. As $V$ is normal, using norms (see \cite[\S XVII.6.3]{SGA4.3}), we conclude that $L$ is torsion. It now suffices to rule out the presence of prime-to-$p$ torsion in $\ker(\Pic(V) \to \Pic(V_0))$. Corollary \ref{cor:resinjcrit} shows that this kernel is contained in the kernel of $\lim \Pic(V_n) \to \Pic(V_0)$. The kernel of $\Pic(V_{n+1}) \to \Pic(V_n)$ is an $\F_p$-vector space for each $n$, so $\lim \Pic(V_n) \to \Pic(V_0)$ has no prime-to-$p$ torsion in the kernel.
\end{proof}

\begin{remark}
In the setting of Theorem \ref{mainthm:charp}, the proof above also shows: if $E \in \Vect(V)$ is trivial over $V_0$, i.e., satisfies $E|_{V_0} \simeq \calO_{V_0}^{\oplus n}$, then $E$ is trivialised by a finite extension of $V$.
\end{remark}

\subsection{Characteristic $0$}
\label{subsec:locpicchar0}

We follow Notation \ref{not:abstractsetup}. Our goal is to prove the following:

\begin{theorem}
	\label{thm:char0}
Fix an excellent normal local $\Q$-algebra $(A,\fram)$ of dimension $\geq 4$, and some $0 \neq f \in \fram$. Assume $\depth_\fram(A/f) \geq 2$. Then $\Pic(V) \to \Pic(V_0)$ is injective.
\end{theorem}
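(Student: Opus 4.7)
The plan is to deduce Theorem \ref{thm:char0} from the characteristic $p$ result (Theorem \ref{mainthm:charp}) via reduction modulo $p$ and Artin-style approximation. First, I would reduce to the case where $A$ is $\fram$-adically complete using the argument of Lemma \ref{lem:reducetocomplete}; completion preserves normality by excellence as well as the depth hypothesis. Such a complete $A$ satisfies Assumption \ref{ass:formalgeomassumptions} by the example following it (it is $S_2$, of dimension $\geq 4$, and admits a dualising complex). By Corollary \ref{cor:resinjcrit}, the theorem reduces to showing that the kernel $K := \ker(\lim_n \Pic(V_n) \to \Pic(V_0))$ vanishes.

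I would first show that $K$ is torsion-free. By Lemma \ref{lem:piconestep} and the long exact sequence underlying it, each $\ker(\Pic(V_{n+1}) \to \Pic(V_n))$ is a quotient of $H^1(V_0, \calO_{V_0})$, which is a module over $H^0(V_0, \calO_{V_0})$ and hence a $\Q$-vector space (since $A$ contains $\Q$). Iterating and passing to the limit, $K$ inherits a compatible $\Q$-vector space structure. Thus $K$ is both divisible and torsion-free, so it suffices to prove $K$ is torsion, in which case $K = 0$.

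To establish torsion, fix $L \in K$. Using Artin approximation (or Popescu's theorem that $\widehat A$ is a filtered colimit of essentially smooth algebras over an essentially-finite-type $\Z$-model), I would find an essentially-finite-type henselian local $\Z$-algebra $(B, \fran)$, a regular element $g \in \fran$, and a line bundle $L_B \in \Pic(V_B)$ (with $V_B = \Spec B - \{\fran\}$) whose completion reproduces $(f, L)$. The depth hypothesis $\depth_\fram(A/f) \geq 2$ enters essentially here: it guarantees that $L$, viewed as a reflexive rank-$1$ sheaf on $\Spec A$, restricts to a reflexive sheaf on $\Spec(A/f)$, a condition needed to descend the data compatibly across approximation. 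Spreading $B$ out to a finite-type $\Z$-algebra $\mathcal R$, for a dense set of closed points $\lambda \in \Spec \mathcal R$ of residue characteristic $p > 0$, the completion at $\lambda$ meets the hypotheses of Theorem \ref{mainthm:charp}; that theorem, via Proposition \ref{prop:mainthminflevel} and the norm construction, trivialises $L_\lambda$ on a finite surjection onto $V_{\mathcal R_\lambda}$. Spreading these trivialising covers out and applying the norm construction globally produces a finite cover of $V_B$ over which some power of $L_B$ is trivial, so $L_B$, and hence $L$, is torsion. Combined with the torsion-freeness of $K$, this forces $L = 0$.

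The main obstacle I anticipate lies in the approximation-and-spreading step: producing the $\Z$-model $(\mathcal R, g, L_{\mathcal R})$ so that it is compatible with $(A, f, L)$ through sufficiently many infinitesimal orders, ensuring that its characteristic $p$ fibres satisfy the hypotheses of Theorem \ref{mainthm:charp}, and assembling the characteristic $p$ trivialising covers into a single finite cover of $V_B$ so that the norm construction actually witnesses torsion of $L_B$. The depth hypothesis is essential at precisely this step and, as the examples in \S \ref{subsec:ex} will confirm, cannot be omitted.
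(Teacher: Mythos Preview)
Your overall strategy---reduce to an arithmetic situation and invoke Theorem~\ref{mainthm:charp}---matches the paper's, and your torsion-freeness observation for $K$ is correct and pleasant (though the divisibility claim is unnecessary and would need more care: inverse limits of $\Q$-vector spaces along non-surjective maps need not be divisible, but torsion-freeness, which is all you need, does pass to subgroups, extensions, and limits). In fact this observation offers a mild simplification over the paper's endgame: the paper spreads out over two dvr's of distinct residue characteristics $p \neq \ell$ and concludes that $L$ is simultaneously $p^\infty$- and $\ell^\infty$-torsion, whereas your torsion-freeness of $K$ would let one stop after a single prime.

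There is, however, a genuine gap in your torsion step. You propose to assemble, from the finite covers trivialising $L_\lambda$ at the individual closed points $\lambda \in \Spec \mathcal R$, a single finite cover of $V_B$ on which some power of $L_B$ becomes trivial. This does not work: those covers do not globalise, and even the exponent (a power of the residue characteristic at $\lambda$) varies with $\lambda$, so no norm argument is available over $V_B$. The paper's mechanism for transferring the characteristic~$p$ conclusion back is different and is the missing idea. One spreads out over a \emph{single} mixed-characteristic dvr $(\calO,(\pi))$ to obtain a model $(\widetilde{A},\widetilde{\fram},\widetilde{t},\widetilde{L})$ whose special fibre $B := \widetilde{A}/\pi$ is normal of depth $\geq 3$, and then applies Corollary~\ref{cor:resinjcrit}(1) \emph{with $\pi$ in the role of $f$}: the reduction map $\Pic(\widetilde{V}) \to \Pic(U)$ (with $U$ the punctured spectrum of $B$) is injective. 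Theorem~\ref{mainthm:charp} applied to $B$ now shows $\widetilde{L}$, hence $L$, is $p^\infty$-torsion. This second use of Corollary~\ref{cor:resinjcrit}, in the $\pi$-direction rather than the $f$-direction, is exactly what replaces your attempted patching. It is also where the depth hypothesis $\depth_\fram(A/f) \geq 2$ really enters: it forces $\depth_\fram(A) \geq 3$, which one then arranges to persist for $B$, so that the model $\widetilde{A}$ satisfies Assumption~\ref{ass:formalgeomassumptions} relative to $\pi$. Your stated role for the depth hypothesis (reflexivity of the restriction of $L$ across the approximation) is not what the argument uses.
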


\begin{proof}
	By Lemma \ref{lem:reducethmefp} below, we may assume that $A$ is an essentially finitely presented $\Q$-algebra. The depth assumption implies that $\depth_\fram(A) \geq 3$ as $f$ acts nilpotently $H^2_\fram(A)$ with kernel $H^1_\fram(A/f) = 0$. Now fix a line bundle $L$ in the kernel of $\Pic(V) \to \Pic(V_0)$. By spreading out (see \cite[\S 2]{HochsterCharpApp}), we can find:
	\begin{enumerate}
		\item A mixed characteristic dvr $(\calO,(\pi))$ with perfect residue field of characteristic $p > 0$.
		\item A normal noetherian $\calO$-flat local ring $\widetilde{A}$ satisfying: 
			\begin{enumerate}
				\item  There is a map $\widetilde{A}[1/\pi] \to A$.
				\item $B := \widetilde{A}/\pi$ is normal of dimension $\dim(A)$ and has depth $\geq 3$ at its closed point.
			\end{enumerate}
		\item A section $\widetilde{A} \to \calO$ of the structure map $\calO \to \widetilde{A}$ defined by an ideal $\widetilde{\fram} \subset \widetilde{A}$ that, after inverting $\pi$, gives the image of the closed point under $\Spec(A) \to \Spec(\widetilde{A}[1/\pi])$.
		\item An element $\widetilde{t} \in \widetilde{A}$ such that $\widetilde{A}/\widetilde{t}$ is $\calO$-flat and maps to $t$ along $\widetilde{A} \to \widetilde{A}[1/\pi] \to A$.
		\item A line bundle $\widetilde{L}$ on $\widetilde{V}$ which induces $L$ over $V$ and lies in the kernel of $\Pic(\widetilde{V}) \to \Pic(\widetilde{V}_0)$;  here $\widetilde{V} = \Spec(\widetilde{A}) - \{ \widetilde{\fram} \}$, and the subscript $0$ denoting passage to the $\widetilde{t} = 0$ fibre.
	\end{enumerate}
	Write $U = \Spec(B) - \{\widetilde{\fram} \cdot B\}$ for the punctured spectrum of $B$, and use the subscript $0$ to indicate passage to the $\tilde{t} = 0$ fibre. Then we have a commutative diagram
	\[ \xymatrix{ \Pic(\widetilde{V}) \ar[r]^-a \ar[d]^-b & \Pic(\widetilde{V}_0) \ar[d]^-c \\
			\Pic(U) \ar[r]^-d & \Pic(U_0) } \]
			where the vertical maps are induced by reduction modulo $\pi$, while the horizontal maps are induced by reduction modulo $\widetilde{t}$. Theorem \ref{mainthm:charp} tells us that the kernel of $d$ is $p^\infty$ torsion. Corollary \ref{cor:resinjcrit} shows  $b$ is injective, so $\widetilde{L}$ (and hence $L$) is is killed by a power of $p$. Repeating the above construction by spreading out over a mixed characteristic dvr whose residue characteristic is $\ell \neq p$, it follows that $L$ is also killed by a power of $\ell$, and is hence trivial.
\end{proof}

\begin{remark}
We do not know a proof of Theorem \ref{thm:char0} that avoids reduction modulo $p$ except when $A$ is $S_3$, where one can argue directly as follows. By Lemma \ref{lem:piccomplete}, it suffices to prove $\Pic(\widehat{V}) \to \Pic(V_0)$ is injective. The kernel of this map is $H^1(\widehat{V},1 + \widehat{I})$, where $I = (f) \subset \calO_V$ is the ideal defining $V_0$. In characteristic $0$, the exponential gives an isomorphism $\widehat{I} \simeq 1 + \widehat{I}$ of sheaves on $\widehat{V}$, so it suffices to prove $H^1(\widehat{V},\widehat{I}) = 0$. Using $f:\calO_V \simeq I$  and $H^1(V,\calO_V) = 0$ (since $\depth_\fram(A) \geq 3$), it suffices to show $T_f(H^2(V,\calO_V)) = 0$. The $A$-module $H^2(V,\calO_V)$ has finite length as $A$ is $S_3$, so $T_f(H^2(V,\calO_V)) = 0$. If $\depth_\fram(A) \geq 3$ but $A$ is not $S_3$, then the last step fails; in fact, there are examples \cite[Example 12]{KollarLefschetz} of such $A$ where $\Pic(\widehat{V}) \to \Pic(V_0)$ is not injective, rendering this approach toothless in general.
\end{remark}

\subsection{An approximation argument}
\label{subsec:approx}
We now explain the approximation argument used to reduce Theorem \ref{thm:char0} to the case of essentially finitely presented algebras over $\Q$. First, we show how modules over the completion of an excellent ring can be approximated by modules over a smooth cover while preserving homological properties.

\begin{lemma}
	\label{lem:approxmod}
	Fix an excellent henselian local ring $(P,\fran)$ with $\fran$-adic completion $\widehat{P}$.  Let $I$ be the category of diagrams $P \to S \to \widehat{P}$ with $P \to S$ essentially smooth and $S$ local. Then one has
	\begin{enumerate}
		\item $I$ is filtered, and $\widehat{P} \simeq \colim_I S$.
		\item $\colim_I \Mod^f S \simeq \Mod^f \widehat{P}$ via the natural functor.
		\item If $M \in \Mod^f_{\widehat{P}}$ has $\pd_{\widehat{P}}(M) < \infty$, then there exists $S \in I$ and $N \in \Mod^f_S$ such that $N \otimes_S^L \widehat{P} \simeq M$.
	\end{enumerate}
\end{lemma}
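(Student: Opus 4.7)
My plan is to deduce all three parts from N\'eron-Popescu desingularization, handling the cohomological descent in (3) by restricting to a cofinal subsystem with flat transitions. Since $P$ is excellent, the completion map $P \to \widehat{P}$ is regular, so Popescu's theorem applies; invoking its strong form (any smooth $A$-algebra mapping to $B$ under a regular map $A \to B$ factors through a smooth $A$-algebra via a smooth map), we can write $\widehat{P}$ as a filtered colimit of smooth $P$-algebras with smooth transition maps. Localizing each member at the preimage of $\fran \widehat{P}$ yields a cofinal filtered subsystem $I' \subset I$ of essentially smooth local $P$-algebras with essentially smooth (hence flat) transitions. This will establish (1), and moreover shows that $S \to \widehat{P}$ is flat for every $S \in I'$, being a filtered colimit of flat maps.

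Part (2) should then follow by the standard descent of finitely presented modules along a filtered colimit of noetherian rings: a module $M \simeq \coker(\widehat{P}^m \to \widehat{P}^n)$ is specified by finitely many matrix entries lying in some $S \in I$, and morphisms between lifts descend after enlarging $S$.

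For (3), I would start with a finite free resolution $F_\bullet \to M$ over $\widehat{P}$ of length $\pd_{\widehat{P}}(M)$ and apply (2) to descend it to a bounded complex $G_\bullet$ of finite free modules over some $S_0 \in I'$ with $G_\bullet \otimes_{S_0} \widehat{P} \simeq F_\bullet$. Flat base change along $S_0 \to \widehat{P}$ gives $H^i(G_\bullet) \otimes_{S_0} \widehat{P} = H^i(F_\bullet) = 0$ for $i > 0$. Each $H^i(G_\bullet)$ is a finitely generated module over the noetherian ring $S_0$; writing $\widehat{P} = \colim_{S \in I'_{S_0/}} S$ and noting that each generator of $H^i(G_\bullet)$ must die at some finite stage, filteredness together with the finiteness of the indexing set of nonvanishing $i$ will produce a single $S \in I'_{S_0/}$ with $H^i(G_\bullet) \otimes_{S_0} S = 0$ for all $i > 0$. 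Flat base change along $S_0 \to S$ then gives $H^i(G_\bullet \otimes_{S_0} S) = 0$ for $i > 0$, so $G_\bullet \otimes_{S_0} S$ is a finite free resolution of $N := H^0(G_\bullet \otimes_{S_0} S) \in \Mod^f_S$, whence $N \otimes_S^L \widehat{P}$ is computed by $G_\bullet \otimes_{S_0} \widehat{P} \simeq F_\bullet \simeq M$, as required.

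The main obstacle is setting up the cofinal subsystem $I'$ with flat transitions: the basic form of Popescu ($\widehat{P} = \colim T_\alpha$ with $T_\alpha$ smooth over $P$) does not by itself force the transitions $T_\alpha \to T_\beta$ or the maps $T_\alpha \to \widehat{P}$ to be flat, and indeed a generic essentially smooth $S_0 \in I$ need not be flat over $\widehat{P}$. Once the strong (smooth-transition) version of Popescu is invoked, the remaining argument reduces to routine combinations of flat base change, finite generation, and filtered-colimit bookkeeping.
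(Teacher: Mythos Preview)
Your proposed argument has a genuine gap at exactly the point you flag as the main obstacle: the cofinal filtered subsystem $I'$ with flat transitions does not exist. In the case relevant for the application (Lemma~\ref{lem:reducethmefp}), $P$ is the henselisation of $k[x_1,\dots,x_n]$ at the origin, so both $P$ and $\widehat{P}$ are regular local of dimension $n$ with residue field $k$. If $S \in I$ has $S \to \widehat{P}$ flat, then since this map is local one gets $n = \dim \widehat{P} \geq \dim S$, while flatness of the local map $P \to S$ gives $\dim S \geq \dim P = n$; hence $\dim S = n$ and the closed fibre $S/\fran S$ is $0$-dimensional. As $S/\fran S$ is also a localisation of a smooth $k$-algebra with residue field $k$, it equals $k$, so $P \to S$ is unramified, hence \'etale. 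But a local essentially \'etale extension of the henselian ring $P$ with trivial residue field extension is $P$ itself (a section exists by henselianness, and it is an isomorphism since $\dim S = \dim P$ and $S$ is a domain). Thus the only member of your $I'$ is $P$, and $\colim_{I'} S = P \neq \widehat{P}$. In particular, the ``strong form'' of Popescu you invoke (a filtered system with smooth transition maps computing $\widehat{P}$) is false here; the parenthetical version you state is true but vacuous, since one may always factor through the identity.

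The paper's proof circumvents this by never asking $S \to \widehat{P}$ to be flat. After descending the free resolution to a complex $L$ over some $S \in I$, it uses henselianness of $P$ to produce a section $S \to P$ agreeing with the given map $a:S \to \widehat{P}$ modulo a high power of $\fran$; an approximation result from \cite{CdJApprox} then shows that $L \otimes_{S,b} \widehat{P}$ is concentrated in degree $0$ for the composite $b: S \to P \to \widehat{P}$. Faithful flatness of $P \to \widehat{P}$ gives the same for $L \otimes_S P$, and a d\'evissage along the regular ideal $\ker(S \to P)$, combined with faithful flatness of completion along that ideal, yields $L \in D^{\geq 0}(S)$.
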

\begin{proof}
	(1) is Popescu's theorem \cite{SwanPopescu}, while (2) is automatic from (1) as all rings in sight are noetherian. Now pick $M \in \Mod^f_{\widehat{P}}$ as in (3) with a finite free resolution $K \to M$ over $\widehat{P}$. Then there exists an $S \in I$ and a finite free $S$-complex $L$ such that $L \otimes_S \widehat{P} = K$ as complexes. It suffices to thus check that $L \in D^{\geq 0}(S)$.  Write $j:P \to S$ and $a:S \to \widehat{P}$ for the given maps.As $P$ is henselian, for each integer $c$, there exists a section $S \to P$ of $j$ such that the composite $b:S \to P \to \widehat{P}$ agrees with $a$ modulo $\fran^c$. Then \cite[Lemma 3.1]{CdJApprox} shows that  $L \otimes_{S,b} \widehat{P}$ is acyclic outside degree $0$ (for sufficiently $c$).  The same is also true for $L \otimes_{S} P$ by faithful flatness.  If $I = \ker(S \to P)$, then $I$ is a regular ideal contained in the Jacobson radical of $S$ (since $S$ is local and essentially $P$-smooth). Let $\widehat{S}$ be the $I$-adic completion of $S$, so $S \to \widehat{S}$ is faithfully flat. By the formula $L \otimes_S \widehat{S} \simeq \R\lim(L \otimes_S S/I^n)$, it suffices to show that the right hand side lies in $D^{\geq 0}(S)$.  The regularity of $I$ shows that each $I^n/I^{n+1}$ is a free $S/I$-module (as $S/I = P$ is local), so $L \otimes_S S/I^n \in D^{\geq 0}(S)$ by devissage as $L \otimes_S S/I \in D^{\geq 0}(S)$.
\end{proof}

The approximation argument used above permits us to make the promised reduction:

\begin{lemma}
	\label{lem:reducethmefp}
To prove Theorem \ref{thm:char0}, it suffices to do so when $A$ is essentially finitely presented over $\Q$.
\end{lemma}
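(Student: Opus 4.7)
The plan is to use faithfully flat descent together with the Popescu-style approximation in Lemma \ref{lem:approxmod} to realize the configuration $(A,f,L)$ over an essentially finitely presented $\Q$-algebra. First, extend $L$ to its unique rank $1$ reflexive extension $M$ on $\Spec(A)$ (using that $A$ is normal and hence $S_2$). Since the hypotheses of Theorem \ref{thm:char0} persist under the faithfully flat map $A \to A^h$ to the henselization, and since triviality of a rank $1$ reflexive module may be checked after this base change by the Nakayama argument used in Lemma \ref{lem:reducetocomplete}, we may assume $A$ is henselian.

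Next, invoke Lemma \ref{lem:approxmod} with $P = A$: this writes $\widehat{A} = \colim_I S$ with each $S$ local, essentially smooth over $A$, and equipped with a retraction $S \to \widehat{A}$. By part (2) of that lemma, the finite $\widehat{A}$-module $M \otimes_A \widehat{A}$ together with the trivialization of its restriction to $(V_0)_{\widehat{A}}$ descends to a finite $S$-module $N_S$ and a trivialization on $(V_0)_S$ for some such $S$. Since $S$ is essentially smooth over $A$ and defined by finitely many elements of $A$, we may choose an essentially finitely presented $\Q$-subalgebra $A_0 \subset A$ containing $f$ and all this defining data, and then spread out $S$ to an essentially smooth $A_0$-algebra $S_0$ with $S = S_0 \otimes_{A_0} A$. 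Composing an essentially smooth map with an essentially finitely presented one makes $S_0$ essentially finitely presented over $\Q$. After further enlargement of $A_0$, the module $N_S$ and its trivialization descend to a rank $1$ reflexive $S_0$-module $N_{S_0}$ trivial on $(V_0)_{S_0}$.

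To conclude, verify that for $A_0$ chosen large enough, the local ring $S_0$ is normal, of dimension $\geq 4$, and satisfies $\depth(S_0/f) \geq 2$; these properties descend from the corresponding ones for $S$ (which inherits them from $A$ via essential smoothness) by standard spreading-out and semi-continuity on $\Spec(A_0)$. The assumed essentially finitely presented case of Theorem \ref{thm:char0} then applies to $S_0$ and yields triviality of $N_{S_0}$; base change along $S_0 \to S \to \widehat{A}$ gives triviality of $M \otimes_A \widehat{A}$, and finally of $M$ (hence $L$) by descent along $A \to \widehat{A}$ as in Lemma \ref{lem:reducetocomplete}.

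The main obstacle is this final verification. Because the inclusion $A_0 \hookrightarrow A$ is not flat in general, the hypotheses on $S_0$ (especially the depth condition $\depth(S_0/f) \geq 2$) cannot simply be transferred from $A$; the argument must instead combine the essentially smooth structure of $A_0 \to S_0$ with Noetherian limit arguments on $\Spec(A_0)$ to propagate depth and normality across the approximation.
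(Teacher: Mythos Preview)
Your approach has a genuine gap---essentially the one you flag in your last paragraph---and it is not clear it can be filled along the lines you suggest. First, applying Lemma~\ref{lem:approxmod} with $P = A$ buys nothing: since $M$ already lives over $A$ and $A \to S$ is flat, one may simply take $N_S = M \otimes_A S$, so the Popescu step is vacuous and your argument collapses to spreading $(A,f,M)$ out over an efp subring $A_0 \subset A$ (followed by an essentially smooth extension to $S_0$). But $A_0 \hookrightarrow A$ is not flat, and there is no general mechanism for choosing $A_0$ so that $S_0$ is normal of dimension $\geq 4$ with $\depth(S_0/f) \geq 2$; standard limit techniques descend finitely presented data and open conditions on morphisms, not depth or normality of the base ring itself. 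There is also a second issue you do not mention: the descended module is only known to be invertible on $\Spec(S_0) \setminus V(\fram_0 S_0)$, whereas applying Theorem~\ref{thm:char0} to $S_0$ requires a line bundle on the full punctured spectrum $\Spec(S_0) \setminus \{\fran_0\}$, which is strictly larger whenever $S_0$ has positive relative dimension over $A_0$.

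The paper's route is quite different and sidesteps both problems. After passing to the \emph{completion} of $A$, one fixes a Cohen presentation $A \simeq \widehat{P}/I$ with $P$ the henselization of $k[x_1,\dots,x_n]$ at the origin, so that $P$ is \emph{regular} and essentially finitely presented over $k$. Lemma~\ref{lem:approxmod} is applied to this $P$: part (3) descends $A$ and $M$, viewed as $\widehat{P}$-modules of finite projective dimension via Auslander--Buchsbaum, to a quotient $B$ of some essentially $P$-smooth local $S$ and a $B$-module $M'$. A section $S \to P$ agreeing with $S \to \widehat{P}$ modulo a high power of the maximal ideal (available since $P$ is henselian, as in \cite{CdJApprox}) then produces an efp quotient $A_c := B \otimes_S P$ of $P$ together with $M_c := M' \otimes_S P$. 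Because $P$ is regular, Auslander--Buchsbaum converts the projective-dimension bounds back into $\depth_{\fram_c}(A_c) \geq 3$ and $\depth_{\fram_c}(M_c) \geq 2$, and normality follows from the Jacobian criterion; since $\fram_c$ is generated by $x_1,\dots,x_n$, the module $M_c$ is automatically invertible on the full punctured spectrum. This passage through a regular base, enabling homological control of the approximation via Auslander--Buchsbaum, is the key idea missing from your outline.
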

\begin{proof}
We may assume the conclusion of Theorem \ref{thm:char0} is known all essentially finitely presented normal local $k$-algebras $A$ of depth $\geq 3$ over a characteristic $0$ field $k$ (the passage from $k = \Q$ to general $k$ is routine and left to the reader). By Lemma \ref{lem:reducetocomplete} and excellence of $A$, it suffices to show the conclusion holds for all triples $(A,\fram,f)$ where $(A,\fram)$ is a complete noetherian local normal ring with $\depth_\fram(A) \geq 3$ in characteristic $0$, and $0 \neq f \in \fram$. 

If $k = A/\fram$, then we choose a Cohen presentation $A = \widehat{P}/I$ where $P$ is the henselisation at $0$ over $k[x_1,\dots,x_n]$, and $\widehat{P}$ is the completion. Choose an element $f \in \widehat{P}$ lifting $f \in A$, and a finite $A$-module $M$ with $\depth_\fram(M) \geq 2$ corresponding to an element in the kernel of $\Pic(V) \to \Pic(V_0)$, where $V = \Spec(A) - \{\fram\}$, and $V_0 = V \cap \Spec(A/f)$. Observe that $\pd_{\widehat{P}}(A) \leq n-3$ and $\pd_{\widehat{P}}(M) \leq n-2$ by Auslander-Buschbaum. We will show $A \simeq M$. 

By Lemma \ref{lem:approxmod}, we can find:
	\begin{enumerate}
		\item A factorisation $P \stackrel{j}{\to} S \stackrel{a}{\to} \widehat{P}$ with $(S,\fran)$ a local essentially smooth $P$-algebra.
		\item A quotient $S \to B$ such that $B \otimes^L_S \widehat{P} \simeq A$.
		\item A finite $B$-module $M'$ invertible on $V_B = \Spec(B) - V(x_1,\dots,x_n)$  such that $M' \otimes_B^L A \simeq M$.
		\item A lift of $f$ to $\fran \subset S$ such that $M'$ is the trivial line bundle on $V_B \cap \Spec(B/f)$.
	\end{enumerate}
	We remark that $\pd_S(B) \leq n-3$ as $B \otimes^L_S \widehat{P} \simeq A$, and similarly $\pd_S(M') \leq n-2$. As $P$ is henselian and $S$ is $P$-smooth with a section over $\widehat{P}$, we may choose a large enough constant $c$ (depending on $M$ and $A$ as $\widehat{P}$-modules) and a section $s_c:S \to P$ of $j$ that coincides with $a$ modulo $(x_1,\dots,x_n)^c$. Set $A_c = B \otimes_S^L P$ and $M_c = M' \otimes_S^L P$. Then, by choice of $c$, both these complexes are in fact discrete, and hence $A_c$ is a local quotient of $P$. Let $\fram_c \subset A_c$ be the maximal ideal; this is the image of $\fran$, and also generated by $\{x_1,\dots,x_n\}$. We call this triple $(A_c,\fram_c,M_c)$ an approximation of $(A,\fram,M)$, and observe that better approximations can be found by replacing $c$ with a larger integer. At the expense of performing this operation, we have:
	\begin{enumerate}
		\item  $(A_c,\fram_c)$ coincides with $(A,\fram)$ modulo $(x_1,\dots,x_n)^c$, and $\dim(A_c) = \dim(A)$ as the Hilbert series of $(A_c,\fram_c)$ and $(A,\fram)$ coincide (see \cite[Theorem 3.2]{CdJApprox}).
		\item $\depth_{\fram_c}(A_c) \geq 3$, and $\depth_{\fram_c}(M_c) \geq 2$ by Auslander-Buschbaum over $P$.
		\item The singular locus of $\Spec(A_c)$ has codimension $\geq 2$ by the Jacobian criterion.
		\item $M_c$ is invertible over $U := \Spec(A_c) - V(x_1,\dots,x_n) = \Spec(A_c) -\{\fram_c\}$. 
		\item $M_c$ restricts to the trivial line bundle over $U \cap \Spec(A_c/f)$.
	\end{enumerate}
	By (2) and (3), such an $A_c$ is in particular normal. As Theorem \ref{thm:char0} is assumed to hold over $A_c$, we conclude that $M_c \simeq A_c$. Nakayama's lemma lifts this to a surjection $B \to M'$, which yields a surjection $A \to M$. As $A$ is a domain and $M$ is torsion free, we get $A \simeq M$, as desired.
\end{proof}

\subsection{Examples}
\label{subsec:ex}

We now give examples illustrating the necessity of the depth assumption in Theorem \ref{thm:char0} as well as the occurrence of $p$-torsion in Theorem \ref{mainthm:charp}. We begin with an example of the non-injectivity of the restriction map for coherent cohomology; this leads to the desired examples via the exponential.

\begin{example}
	\label{ex:segrecurvepn}
	Fix a canonically embedded smooth projective curve $C$ of genus $g > 1$ over a field $k$. Let $L = \calO_{\P^n}(1) \boxtimes K_C$ be the displayed line bundle on $\P^n \times C$ (for $n > 0$), and let $\V(L^{-1}) \to \P^n \times C$ be its total space. Set $(X,x)$ be the affine cone over $\P^n \times C$ with respect $L$, i.e., $X = \Spec(A)$ where $A := \Gamma(\V(L^{-1}), \calO_{\V(L^{-1})}) = \oplus_{i \geq 0} H^0(\P^n \times C, L^i)$, $x$ is the origin, and let $V = X - \{x\} \subset X$ be the punctured cone; note that $L$ is very ample and $A$ is normal. The affinization map $\V(L^{-1}) \to X$ is the contraction of the $0$ section of $\V(L^{-1})$, so we can view $V$ as the complement of the zero section in $\V(L^{-1})$. In particular, the Kunneth formula shows
	\[ H^0(V,\calO_V) = H^0(X,\calO_X) \simeq \oplus_{i \geq 0} H^0(\P^n,\calO_{\P^n}(i)) \otimes H^0(C,K_C^{\otimes i}) \]
	and
	\[ H^1(V,\calO_V) = \oplus_{i \in \Z} H^1(\P^n \times C, L^i) \simeq \Big(H^0(\P^n,\calO_{\P^n}) \otimes H^1(C,\calO_C)\Big) \oplus \Big(H^0(\P^n,\calO_{\P^n}(1)) \otimes H^1(C,K_C)\Big),\]
	with the evident $H^0(V,\calO_V)$-module structure. Pick non-zero sections $s_1 \in H^0(\P^n,\calO_{\P^n}(1))$ and $s_2 \in H^0(C,K_C)$, and set $f = s_1 \otimes s_2 \in A$.  We will show that multiplication by $f$ on $H^1(V,\calO_V)$ has non-zero image. First, note that $s_2$ defines a map $\calO_C \to K_C$ that induces a surjective non-zero map $H^1(C,\calO_C) \to H^1(C,K_C)$. Since $s_1$ induces an injective map $H^0(\P^n,\calO_{\P^n}) \to H^0(\P^n,\calO_{\P^n}(1))$, it follows $f = s_1 \otimes s_2$ induces a non-zero map 
	\[ H^0(\P^n,\calO_{\P^n}) \otimes H^1(C,\calO_C) \to H^0(\P^n,\calO_{\P^n}(1)) \otimes H^1(C,K_C), \]
	and hence a non-zero endomorphism of $H^1(V,\calO_V)$ by the description above. In particular, if we set $V_0 = V \cap \Spec(A/f) \subset V$, then the map $H^1(V,\calO_V) \to H^1(V_0,\calO_{V_0})$ is not injective. The same calculation is valid after replacing $X$ with its completion $Y$ at $x$, and $V$ and $V_0$ with their preimages $U$ and $U_0$ respectively in $Y$ (as $H^1(V,\calO_V) \simeq H^1(U,\calO_U)$, and similarly for $V_0$). Finally, since $H^1(V,\calO_V)[f] \neq 0$,  the inclusion $A/f \hookrightarrow H^0(V_0,\calO_{V_0})$ is not surjective, so $\depth_x(A/f) = 1$; this reasoning also shows $\depth_x(A/g) = 1$ for any $0 \neq g \in A$ vanishing at $x$.
\end{example}

\begin{remark}
	The construction and conclusion of Example \ref{ex:segrecurvepn} works over any normal ring $k$, and specialises to the desired conclusion over the fibres as long as the sections $s_i$ are chosen to be non-zero in every fibre.
\end{remark}

Via the exponential, we obtain an example illustrating the depth condition in Theorem \ref{thm:char0}:

\begin{example}
	\label{ex:expsegre}
	Consider Example \ref{ex:segrecurvepn} over $k = \C$. The exponential sequence shows $\Pic(V^\an) \to \Pic(V_0^\an)$ is not injective as $H^1(V^\an,\Z)$ is countable. One then also has non-injectivity of $\Pic(W) \to \Pic(W_0)$, where $W$ is any link of $x \in X^\an$, i.e., $W = \overline{W} - \{x\}$ for a small contractible Stein analytic neighbourhood $\overline{W}$ of $x$ in $X^\an$; this is because $H^1(V^\an,\Z) \simeq H^1(W,\Z)$ (as both sides are homotopy equivalent to the circle bundle over $\P^n \times C$ defined by $L^{-1}$), and $H^1(V^\an,\calO_{V^\an}) \simeq H^1(W,\calO_W)$ (by excision and Cartan's Theorem B).  By \cite[Theorem 5]{SiuExtendSheaf}, since any such $\overline{W}$ is normal of dimension $\geq 3$,  we may identify $\Pic(W)$ with isomorphism classes of analytic coherent $S_2$ sheaves on $\overline{W}$ free of rank $1$ over $W$. Nakayama then shows non-injectivity of $\Pic(U) \to \Pic(U_0)$. 
\end{example}

\begin{remark}
	The (punctured) local scheme of Example \ref{ex:expsegre} is not essentially of finite type over $k$, but rather the (punctured) completion of such a scheme;  an essentially finitely presented example can be obtained via Artin approximation. Note that {\em some} approximation is necessary to algebraically detect the analytic line bundles from Example \ref{ex:expsegre} since $\Pic(V) = \Pic(C \times \P^2)/\Z \cdot L$ is smaller than $\Pic(V^\an)$.
\end{remark}

Reducing modulo $p$ (suitably) shows that the map of Theorem \ref{mainthm:charp} often has a non-trivial $p$-torsion kernel:

\begin{example}
	\label{ex:ptorsionpossible}
	Consider Example \ref{ex:segrecurvepn} over $k = \Z[1/N]$ for $n \geq 3$, and suitable choices of $N$, $C$, $s_1$, and $s_2$. Let $B$ be the blowup of $Y$ at $x$; this may be viewed as the base change to $Y$ of the contraction $\V(L^{-1}) \to X$. Write $\widehat{B}$ for the formal completion of $B$ along $i:\P^n \times C  \hookrightarrow B$ (coming from the $0$ section), and let $I \subset \calO_B$ denote the ideal defining $i$, so $i^*(I) \simeq L$. Using formal GAGA for $B \to Y$, one can check that there is an exact sequence
	\[ 1 \to H^1(\widehat{B},1 + I) \to \Pic(B) \to \Pic(\P^n \times C) \to 1 \]
	with a canonical splitting provided by the composite projection $B \to \V(L^{-1}) \to \P^n \times C$. As $n \geq 3$, using Kunneth, one computes
	\begin{equation}
		\label{eq:truncexp}
		H^1(\widehat{B},1+I) \stackrel{\can}{\simeq} H^1(\widehat{B},(1+I)/(1 + I^2)) \stackrel{\log}{\simeq} H^1(\widehat{B},I/I^2) \simeq H^1(\P^n \times C,L),
	\end{equation}
	which, again thanks to Kunneth, gives an exact sequence
	\[ 1 \to \Big(H^0(\P^n,\calO_{\P^n}(1)) \otimes H^1(C,K_C)\Big) \to \Pic(B) \to \Pic(\P^n \times C) \to 1.\]
	The restriction map $\Pic(B) \to \Pic(U)$ has kernel $\Z \cdot L \subset \Pic(\P^n \times C) \subset \Pic(B)$, where the last inclusion comes from the splitting. Thus, there is an injective map
	\[ \Big(H^0(\P^n,\calO_{\P^n}(1)) \otimes H^1(C,K_C)\Big) \hookrightarrow \Pic(U). \]
	We leave it to the reader to check that this map coincides with the one coming from the exponential when specialising to $k = \C$. In particular, after replacing everything in sight with its base change along $k \to \F_p$ for suitable $p$,  we see that $\Pic(U) \to \Pic(U_0)$ has a non-zero kernel; note that, as predicted by Theorem \ref{mainthm:charp}, this kernel is visibly $p$-torsion.
\end{example}

\section{A vector bundle analogue}
\label{sec:bundlethm}

Our goal is to prove the following vector bundle analogue of Theorem \ref{mainthm:charp}:

\begin{theorem}
	\label{thm:bundles}
	Let $X$ be a normal projective variety of dimension $\geq 3$ over an algebraically closed field $k$ of characteristic $p > 0$. If $E \in \Vect(X)$ is trivial over an ample divisor, then $E$ is trivialised by a torsor for a finite connected $k$-group scheme. In particular, $(\Frob_X^e)^* E \simeq \calO_X^{\oplus r}$ for $e \gg 0$.
\end{theorem}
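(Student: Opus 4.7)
The plan is to mimic the proof of Theorem \ref{mainthm:charp} by passing to the affine cone over $X$ and applying vector bundle versions of the results in \S\ref{sec:localpic}.

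\textbf{Setup via the affine cone.} After replacing $D$ by a sufficiently positive multiple, assume $L := \calO_X(D)$ is very ample, and let $A := \bigoplus_{n \geq 0} H^0(X, L^n)$ be the section ring: a finitely generated normal graded $k$-algebra with $\mathrm{Proj}(A) \simeq X$. Set $\fram := A_{+}$ (the irrelevant ideal), $V := \Spec(A) - V(\fram)$, and let $\pi : V \to X$ be the natural $\mathbb{G}_m$-torsor. Pick $f \in A_1$ defining $D$ and set $V_0 := V \cap V(f) = \pi^{-1}(D)$; then $\widetilde{E} := \pi^* E$ is a $\mathbb{G}_m$-equivariant vector bundle on $V$ whose restriction to $V_0$ is trivial. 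Localizing (or completing) at $\fram$ places us in the setting of an excellent normal local $\F_p$-algebra of dimension $\dim X + 1 \geq 4$, so the machinery of \S\ref{sec:localpic} applies.

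\textbf{Triviality over $\overline{V}$.} Let $\overline{A}$ denote the absolute integral closure of $A_\fram$, with $\overline{V}, \overline{V}_0$ as in \S\ref{subsec:locpiccharp}. By Theorem \ref{thm:hhbigcmalg}, $\overline{A}$ is Cohen--Macaulay, giving $H^i(\overline{V}, \calO_{\overline{V}}) = 0$ for $0 < i < \dim X$ and (since $f$ is regular and $\overline{A}/f$ is also CM) $H^1(\overline{V}_0, \calO_{\overline{V}_0}) = 0$. Now I run the vector bundle analogue of Corollary \ref{cor:resinjcrit}: the Remark after Lemma \ref{lem:piccomplete} upgrades Lemma \ref{lem:sectcomplete} to show that $\Vect(\overline{V}) \to \Vect(\widehat{\overline{V}})$ is injective on isomorphism classes, and the deformation argument of Lemma \ref{lem:piconestep}---with obstructions to lifting a trivialization valued in $H^1(\overline{V}_0, \calO_{\overline{V}_0}) \otimes \mathfrak{gl}_r = 0$---gives injectivity of $\Vect(\widehat{\overline{V}}) \to \Vect(\overline{V}_0)$. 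Hence $\widetilde{E}|_{\overline{V}}$ is trivial.

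\textbf{Descent to a cover of $X$.} Writing $\overline{A}$ as a filtered colimit of finite normal extensions of $A$, the trivializing frame appears over some finite cover $V_B \to V$, so $\widetilde{E}|_{V_B}$ is trivial. Since $\widetilde{E}$ is $\mathbb{G}_m$-equivariant, one can arrange that $V_B \to V$ is also $\mathbb{G}_m$-equivariant---for instance by working throughout with a graded (i.e., $\mathbb{G}_m$-equivariant) version of the absolute integral closure---so that it descends to a finite cover $X' := V_B/\mathbb{G}_m \to X$ with $E|_{X'}$ trivial.

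\textbf{Reducing to a finite connected group scheme torsor; main obstacle.} To upgrade to a torsor for a finite connected $k$-group scheme, take the Galois closure of $X' \to X$ and let $G^0$ be the connected component of the Galois group scheme $G$, so $X'/G^0 \to X$ is finite étale. The main obstacle will be showing this étale cover is trivial. The strategy: restrict to $D$, where $E|_D$ is already trivial and hence the induced étale cover carries no essential trivialization data, and combine this with a Lefschetz-type argument for $\pi_1$ (e.g., applying Corollary \ref{cor:lefschetzpi1} after $f$-adic completion to get surjectivity $\pi_{1,\et}(V_0) \to \pi_{1,\et}(V)$) to conclude triviality. Then $E$ is trivialized by a $G^0$-torsor with $G^0$ finite connected, and since $\Frob^e : G^0 \to G^0$ vanishes for $e \gg 0$, $(\Frob_X^e)^*$ of this torsor is trivial, yielding $(\Frob_X^e)^* E \simeq \calO_X^{\oplus r}$.
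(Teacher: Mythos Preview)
Your approach via the affine cone differs from the paper's, which works directly on the absolute integral closure $\overline{X}$ of the projective variety and redevelops the formal geometry there (Proposition~\ref{prop:bundlevanishing} through Lemma~\ref{lem:restrictionfullyfaithfulbundle}). In fact the authors explicitly remark, just after the statement of Theorem~\ref{thm:bundles}, that ``it does not seem straightforward to deduce the former from the latter,'' i.e.\ to pull the projective statement out of the local one. Your proposal is exactly such an attempt, and it runs into the difficulties they allude to.

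The first real gap is the descent step. Knowing that $\widetilde{E}=\pi^*E$ becomes trivial on a $\mathbb{G}_m$-equivariant finite cover $V_B\to V$ does \emph{not} yield that $E|_{X'}$ is trivial for $X'=V_B/\mathbb{G}_m$: the pullback $(\pi')^*$ kills every power of $L|_{X'}$, so for instance $E=\calO_X(1)\oplus\calO_X(-1)$ on $\P^n$ satisfies $\pi^*E\simeq\calO_V^2$ while $E$ is not trivial. What you would need is that the \emph{trivialization} produced by the formal-geometry argument is itself $\mathbb{G}_m$-equivariant. This is plausible---one would run Lemmas~\ref{lem:sectcomplete}, \ref{lem:piccomplete} and the deformation theory equivariantly, starting from the equivariant trivialization of $\widetilde{E}|_{V_0}$ coming from $E|_D\simeq\calO_D^r$---but you have not done it, and it requires in particular a graded version of Theorem~\ref{thm:hhbigcmalg} together with care about what ``$V_B/\mathbb{G}_m$'' means when $B$ is only $\tfrac{1}{N}\Z$-graded. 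The paper sidesteps all of this by proving $\overline{E}\simeq\calO_{\overline X}^r$ directly.

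The second gap is the passage from ``trivialized by some finite cover'' to ``trivialized by a torsor under a finite $k$-group scheme.'' Your ``take the Galois closure of $X'\to X$ and let $G$ be the Galois group scheme'' is not a well-defined construction when $X'\to X$ is not \'etale. The paper handles this by invoking \cite{MehtaAnteipi1} (essentially Nori's theory of essentially finite bundles): once $E$ is trivialized by \emph{some} finite cover, it is automatically trivialized by a $G$-torsor for a finite $k$-group scheme $G$, namely the Tannakian monodromy of $E$. Only then does the connected--\'etale sequence for $G$, combined with the Lefschetz surjection $\pi_1(X_0)\twoheadrightarrow\pi_1(X)$ (Corollary~\ref{cor:lefschetzpi1bundle}, proved directly on $X$ rather than via the cone), allow one to replace $G$ by $G^0$.
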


Our approach to Theorem \ref{thm:bundles} is the same as that to Theorem \ref{mainthm:charp}. However, it does not seem straightforward to deduce the former from the latter, so we redo the relevant arguments in a slightly different setting. For the rest of this section, we adopt the following notation:

\begin{notation}
Fix a normal projective variety $X$ of dimension $d$ over an algebraically closed field $k \supset \F_p$, and an ample divisor $H \subset X$.   Let $\overline{X}$ be a fixed absolute integral closure of $X$. For any geometric object $F$ over $X$, write $\overline{F}$ for its pullback to $\overline{X}$. For any $X$-scheme $Y$, we write $Y_n$ for the $n$-th infinitesimal neighbourhood of the inverse image of $H$, and $\widehat{Y}$ for the formal completion of $Y$ along $Y_0$. For $K \in D(\calO_Y)$, write $\widehat{K} \simeq \R\lim(K \otimes_{\calO_Y} \calO_{Y_n})$, viewed as an object on $\widehat{Y}$.  Finally, we use $\underline{\Vect}(Y)$ to denote the {\em groupoid} of vector bundles on $Y$.
\end{notation}

The basic vanishing result that will be used is:

\begin{proposition}
	\label{prop:bundlevanishing}
	For  $E \in \Vect(\overline{X})$, $i < d$ and $n \gg 0$, we have $H^i(\overline{X},E(-n\overline{H})) = 0$.
\end{proposition}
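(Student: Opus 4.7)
The plan is to pass to the affine cone over $X$ and apply Theorem~\ref{thm:hhbigcmalg}. Let $R = \bigoplus_{n \geq 0} H^0(X, \calO_X(nH))$, so $X = \mathrm{Proj}(R)$ and the localization $R_\fram$ at the irrelevant maximal ideal is an excellent normal local domain of dimension $d+1$. I would first descend $E$ to a vector bundle $E'$ on some finite normal cover $\pi_0 \colon X_0 \to X$. Since $\overline{X}$ is the cofiltered limit of finite normal covers $X' \to X_0$ and cohomology commutes with such limits on qcqs schemes,
\[
H^i(\overline{X}, E(-n\overline{H})) = \colim_{X' \to X_0} H^i(X', (\pi')^* E'(-nH)).
\]

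I would first handle the case $E' = \calO_{X_0}$. For each cover $X'$ with section ring $R' = \bigoplus_n H^0(X', (\pi')^*\calO(nH))$ and irrelevant ideal $\fram'$, the classical comparison formula gives $H^i(X', \calO_{X'}(nH)) = H^{i+1}_{\fram'}(R')_n$ for $i \geq 1$. Passing to the colimit, setting $R^+ := \colim R'$, and using that local cohomology commutes with filtered colimits, we obtain $H^i(\overline{X}, \calO(n\overline{H})) = H^{i+1}_{\fram^+}(R^+)_n$ for $i \geq 1$. Theorem~\ref{thm:hhbigcmalg} applied to (the completion of) $R_\fram$ then yields $H^j_{\fram^+}(R^+) = 0$ for $j < d+1$, provided the Hochster--Huneke $p^n$-th-root trivialization of low-depth local-cohomology classes respects the grading (after rescaling weights), so that graded finite extensions $R' \supset R$ corresponding to finite covers of $X$ suffice to kill these classes. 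Hence $H^i(\overline{X}, \calO(n\overline{H})) = 0$ for $1 \leq i < d$ and all $n$; the $i = 0$ case for negative twists is immediate since $\calO_{X'}(-nH)$ is anti-ample on each projective $X'$ for $n > 0$.

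For general $E$, I would reduce to the line bundle case by resolving $E'$ by sums of twisted structure sheaves. The graded $R_0$-module $N_0 = \bigoplus_n H^0(X_0, E'(nH))$ admits a graded free resolution over $R_0$, sheafifying to a (possibly infinite) resolution
\[
\cdots \to \calO_{X_0}(-a_1 H)^{r_1} \to \calO_{X_0}(-a_0 H)^{r_0} \to E' \to 0.
\]
After pulling back to $X'$ and twisting by $\calO(-nH)$, the hypercohomology spectral sequence for a truncated resolution has $E_1$-entries $H^q(X', \calO_{X'}(-(a_k+n)H)^{r_k})$ converging to $H^{*}(X', (\pi')^*E'(-nH))$. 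In the colimit over $X'$, entries with $0 < q < d$ vanish by the line bundle case, entries with $q = 0$ and $k$ in the truncation range vanish for $n$ larger than $-\min_k a_k$, and entries with $q > d$ vanish by dimension. Combined with a dimension-shifting argument through the syzygies (and, for the $i = 0$ case, a Langer-type boundedness of $\mu_{\max}/p^n$ along Frobenius covers in the pro-finite tower, which controls $H^0$-growth uniformly), this gives $H^i(\overline{X}, E(-n\overline{H})) = 0$ for $i < d$ and $n \gg 0$.

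The main obstacle I anticipate is the compatibility of Theorem~\ref{thm:hhbigcmalg} with the grading: one must verify that killing of low-depth local-cohomology classes can be arranged using \emph{graded} finite extensions of $R$ (equivalently, finite covers of $X$), rather than general finite extensions of $R_\fram$. The $p^n$-th-root argument is graded-compatible up to weight rescaling, so this is plausible but requires careful bookkeeping. A secondary technical point is the $i = 0$ case for general $E$ in characteristic $p$, which requires uniform vanishing of $H^0(X', (\pi')^*E'(-nH))$ along the pro-finite tower despite potential Frobenius-driven destabilization of the pullbacks.
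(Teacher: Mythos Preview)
Your overall strategy---reduce to the case of twists of $\calO_{\overline{X}}$ via a resolution, and handle that case by Hochster--Huneke---matches the paper's. But there is a genuine gap in the reduction step, and it stems from resolving in the wrong direction.

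With a \emph{left} resolution $\cdots \to P_1 \to P_0 \to E \to 0$ (pulled back to $\overline{X}$), the hypercohomology spectral sequence contributing to $H^i(\overline{X},E(-n\overline{H}))$ has terms $H^{i+k}(\overline{X},P_k(-n\overline{H}))$. The line-bundle vanishing you establish only covers degrees strictly below $d$, so the entry with $i+k=d$ survives, and $H^d(\overline{X},\calO(m\overline{H}))$ certainly does not vanish. Your ``dimension-shifting through the syzygies'' just reproduces this problem: iterating $H^i(E)\hookrightarrow H^{i+1}(K_1)\hookrightarrow\cdots$ eventually lands in $H^d$ of a syzygy, where you are stuck. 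The paper avoids this by resolving the \emph{dual} of $E$ at a finite level and dualizing, producing a finite \emph{right} resolution $E\to P^0\to\cdots\to P^N$ (all $P^j$ sums of twists of $\calO_{\overline{X}}$) with cone $Q\in D^{\geq N}(\calO_{\overline{X}})$. Then $H^i(\overline{X},E(-n\overline{H}))\simeq H^i(\overline{X},P(-n\overline{H}))$ for $i<d$, and the latter has a finite filtration with graded pieces subquotients of $H^{i-j}(\overline{X},P^j(-n\overline{H}))$; since $i-j\le i<d$, every piece is in the vanishing range for $n\gg 0$. This is the missing idea.

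Two smaller remarks. Your worry about $i=0$ for general $E$ is misplaced: a surjection $\calO_{X_0}(aH)^r\twoheadrightarrow (E')^\vee$ dualizes to an injection $E'\hookrightarrow \calO_{X_0}(aH)^r$, so after pullback $H^0(X',(\pi')^*E'(-nH))\subset H^0(X',\calO_{X'}((a-n)H))^r=0$ for $n>a$, uniformly in $X'$; no Langer-type input is needed. Your concern about the graded compatibility of Hochster--Huneke is legitimate bookkeeping (and the $p$-th-root argument does respect the grading up to reindexing), but the paper simply cites \cite{HHBigCM} for the twist-of-$\calO_{\overline{X}}$ case and moves on.
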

\begin{proof}
	If $E$ is a finite direct sum of twists of $\calO_{\overline{X}}$ by $\overline{H}$, then the claim follows from \cite{HHBigCM}. For the general case, fix a sufficiently large integer $N$. Then the standard construction of free resolutions (applied to the dual of $E$ at some finite level) shows that one can find an exact triangle $E \to P \to Q$ in $D^{\geq 0}(\calO_{\overline{X}})$ such that
	\begin{enumerate}
		\item $P = \Big(P^0 \to P^1 \to \dots \to P^N\Big)$ with $P^i$ a finite direct sum of twists of $\calO_{\overline{X}}$ (in cohomological degree $i$).
		\item $Q$ lies in $D^{\geq N}(\calO_{\overline{X}})$.
	\end{enumerate}
	Then (2) shows that $H^i(\overline{X},E(-n\overline{H})) \simeq H^i(\overline{X},P(-n\overline{H}))$ for $i < d$ and any $n$. By (1), each $H^i(\overline{X},P(-n\overline{H}))$ admits a {\em finite} filtration with graded pieces being subquotients of $H^{i-j}(\overline{X},P^j(-n\overline{H}))$. Each of these subquotients vanishes for $i < d$ and $n \gg 0$. The desired conclusion follows as the filtration is finite.
\end{proof}

We can now algebraise some cohomology groups:

\begin{lemma}
	\label{lem:formalcohomologyvectorbundle}
Assume $d \geq 2$. For any $E \in \Vect(\overline{X})$, we have $H^i(\overline{X},E) \simeq H^i(\widehat{\overline{X}},\widehat{E})$ for $i < d-1$. The analogous claim for $i=0$ is also valid on $X$.
\end{lemma}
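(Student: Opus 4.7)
The plan is to mimic the derived-completion argument of \S \ref{sec:formalgeom}: identify the formal cohomology as a derived inverse limit of honest coherent cohomologies, and then use the vanishing in Proposition \ref{prop:bundlevanishing} to show the resulting inverse systems are eventually constant. To set this up, I would first note that since $H \subset X$ is an effective Cartier divisor and $\overline{X}$ is integral, the pullback $\overline{H} \subset \overline{X}$ is still an effective Cartier divisor with ideal sheaf $\calI \simeq \calO_{\overline{X}}(-\overline{H})$; the $n$-th infinitesimal neighbourhood $\overline{X}_n$ is cut out by $\calI^{n+1}$, and flatness of $E$ identifies $E_n := E \otimes^L_{\calO_{\overline{X}}} \calO_{\overline{X}_n}$ with $E/\calI^{n+1}E$, fitting into a short exact sequence
\[ 0 \to E(-(n+1)\overline{H}) \to E \to E_n \to 0. \]

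Next, I would apply Proposition \ref{prop:bundlevanishing} to the bundle $E(-(n+1)\overline{H})$: for each fixed $j < d$, this cohomology group vanishes once $n$ is large enough. Feeding this into the long exact sequence for $j = i$ and $j = i+1$ shows that for $i < d-1$ and all sufficiently large $n$ (depending on $i$), the natural map $H^i(\overline{X}, E) \to H^i(\overline{X}, E_n)$ is an isomorphism. Hence the tower $\{H^i(\overline{X}, E_n)\}_n$ is eventually constant with limit $H^i(\overline{X}, E)$, and in particular satisfies Mittag-Leffler.

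The final step is to invoke the Milnor exact sequence. Unwinding $\widehat{E} = \R\lim E_n$ and using that each $E_n$ is topologically supported on $\overline{X}_0$, one obtains $\R\Gamma(\widehat{\overline{X}},\widehat{E}) \simeq \R\lim \R\Gamma(\overline{X}, E_n)$, and hence an exact sequence
\[ 1 \to \R^1\lim H^{i-1}(\overline{X}, E_n) \to H^i(\widehat{\overline{X}}, \widehat{E}) \to \lim H^i(\overline{X}, E_n) \to 1. \]
For $i < d-1$, the previous step kills the $\R^1\lim$ term and identifies the $\lim$ term with $H^i(\overline{X}, E)$, yielding the desired isomorphism. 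The $i = 0$ statement on $X$ is handled by the identical argument, with Proposition \ref{prop:bundlevanishing} replaced by Serre vanishing $H^j(X, E(-nH)) = 0$ for $j < d$ and $n \gg 0$; the hypothesis $d \geq 2$ is exactly what puts both $j = 0$ and $j = 1$ in range.

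I do not expect a serious obstacle: once Proposition \ref{prop:bundlevanishing} is granted, everything reduces to Mittag-Leffler bookkeeping. The step deserving most care is justifying the commutation $\R\Gamma(\widehat{\overline{X}},\widehat{E}) \simeq \R\lim \R\Gamma(\overline{X}, E_n)$ despite $\overline{X}$ being non-noetherian; this follows by the same pro-isomorphism argument as in the Lemma following Notation \ref{not:abstractsetup}, and is easier here since $\overline{X}$ is integral and hence has no $\overline{H}$-torsion to worry about.
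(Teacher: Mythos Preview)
Your argument for $\overline{X}$ is correct and is essentially the paper's proof unpacked: the paper phrases it as a single exact triangle
\[ \R\lim \R\Gamma(\overline{X},E(-n\overline{H})) \to \R\Gamma(\overline{X},E) \to \R\Gamma(\widehat{\overline{X}},\widehat{E}) \]
and observes that Proposition \ref{prop:bundlevanishing} forces the first term into $D^{[d,d+1]}$, which is exactly your Mittag--Leffler bookkeeping said in derived language.

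There is, however, a genuine slip in the last paragraph. The vanishing $H^j(X,E(-nH)) = 0$ for all $j < d$ and $n \gg 0$ is \emph{not} Serre vanishing (Serre vanishing concerns positive twists), and more importantly it is \emph{false} for a general normal projective variety: by Grothendieck duality it would force $X$ to be Cohen--Macaulay, which is not assumed. What you actually need for the $i=0$ case is only $j=0$ and $j=1$, and these hold for different reasons: $H^0(X,E(-nH)) = 0$ for $n \gg 0$ is elementary from ampleness, while $H^1(X,E(-nH)) = 0$ for $n \gg 0$ is the Lemma of Enriques--Severi--Zariski, which uses that $E$ has depth $\geq 2$ at closed points (this is where normality and $d \geq 2$ enter). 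The paper invokes exactly these two facts. Once you replace the appeal to ``Serre vanishing'' by these, your argument goes through unchanged.
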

\begin{proof}
	We first show the claim for $\overline{X}$. The projective system of exact sequences $1 \to E(-n\overline{H}) \to E \to E|_{\overline{X}_{n-1}} \to 1$ gives a triangle
	\[ \R\lim \R\Gamma(\overline{X},E(-n\overline{H})) \to \R\Gamma(\overline{X},E) \stackrel{a}{\to} \R\Gamma(\widehat{\overline{X}},\widehat{E}). \]
	The left hand side lies in $D^{[d,d+1]}(k)$ by Proposition \ref{prop:bundlevanishing}, so $H^i(a)$ is an isomorphism for $i < d-1$. For $X$, the same argument applies once we observe that $H^0(X,E(-nH)) = 0$ for $n \gg 0$ by ampleness as $d \geq 1$, and that $H^1(X,E(-nH)) = 0$ for $n \gg 0$ by the Lemma of Enriques-Severi-Zariski as $d \geq 2$.
\end{proof}

Passage to formal completions of ample divisors faithfully reflects the geometry of bundles:

\begin{lemma}
	\label{lem:completionfullyfaithfulbundle}
	Assume $d \geq 2$. The functor $\underline{\Vect}(\overline{X}) \to \underline{\Vect}(\widehat{\overline{X}})$ is fully faithful, and similarly on $X$.
\end{lemma}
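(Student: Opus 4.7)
The plan is to reduce full faithfulness of $\underline{\Vect}(\overline{X}) \to \underline{\Vect}(\widehat{\overline{X}})$ to the $i = 0$ case of Lemma \ref{lem:formalcohomologyvectorbundle} applied to internal Hom-bundles. Concretely, for $E, F \in \Vect(\overline{X})$, the bundle $G := E^\vee \otimes F$ again lies in $\Vect(\overline{X})$, and one has the standard identifications
\[ \Hom_{\calO_{\overline{X}}}(E, F) = H^0(\overline{X}, G) \quad \text{and} \quad \Hom_{\calO_{\widehat{\overline{X}}}}(\widehat{E}, \widehat{F}) = H^0(\widehat{\overline{X}}, \widehat{G}), \]
where the second equality uses the level-wise identification $\widehat{G}|_{\overline{X}_n} \simeq (\widehat{E}^\vee \otimes \widehat{F})|_{\overline{X}_n}$ coming from the exactness of tensor and dual operations on locally free sheaves, so that the derived and underived completions agree and commute with these operations.

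Next, Lemma \ref{lem:formalcohomologyvectorbundle} applied to $G$ gives $H^i(\overline{X}, G) \simeq H^i(\widehat{\overline{X}}, \widehat{G})$ for $i < d - 1$; since $d \geq 2$, this covers $i = 0$, producing a bijection $\Hom(E, F) \xrightarrow{\sim} \Hom(\widehat{E}, \widehat{F})$ of abelian groups. The analogous assertion on $X$ follows identically using the final clause of the same lemma, which handles the $i = 0$ case on $X$.

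Finally, to upgrade this equivalence of Hom-sets to full faithfulness of the maximal subgroupoids, note that an additive bijection of Hom-modules respecting composition automatically restricts to a bijection on invertible morphisms: given an isomorphism $\widehat{\phi} : \widehat{E} \xrightarrow{\sim} \widehat{F}$, lift $\widehat{\phi}$ and $\widehat{\phi}^{-1}$ to unique $\phi : E \to F$ and $\psi : F \to E$; then $\widehat{\psi \circ \phi} = \mathrm{id}_{\widehat{E}} = \widehat{\mathrm{id}_E}$, so injectivity of $\End(E) \to \End(\widehat{E})$ forces $\psi \circ \phi = \mathrm{id}_E$, and symmetrically $\phi \circ \psi = \mathrm{id}_F$. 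I expect no serious obstacle here: the cohomological content has already been isolated in Lemma \ref{lem:formalcohomologyvectorbundle}, and what remains is routine bookkeeping tracking the internal Hom and the passage from categories to their maximal subgroupoids.
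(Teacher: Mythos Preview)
Your proposal is correct and the first half (identifying $\Hom(E,F)$ with $\Hom(\widehat{E},\widehat{F})$ via Lemma~\ref{lem:formalcohomologyvectorbundle} applied to $E^\vee\otimes F$) matches the paper's proof verbatim. The second half differs: to show the functor reflects isomorphisms, the paper passes to determinants to reduce to the line bundle case and then argues geometrically that $\coker(f)$, if nonzero, would be supported on a divisor disjoint from the ample $\overline{H}$, a contradiction. You instead use the purely categorical observation that a functor which is bijective on all Hom-sets automatically reflects isomorphisms (lift $\widehat{\phi}^{-1}$ and use faithfulness on $\End$). Your route is cleaner and avoids the determinant/ampleness detour; the paper's argument, on the other hand, makes explicit where the geometry enters and would still work even if one only knew injectivity (not bijectivity) of $\Hom(F,E)\to\Hom(\widehat{F},\widehat{E})$. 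Either way there is no gap.
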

\begin{proof}
	Lemma \ref{lem:formalcohomologyvectorbundle} shows that $\Hom(E,F) \simeq \Hom(\widehat{E},\widehat{F})$ for $E,F \in \Vect(\overline{X})$ (or $\Vect(X)$). It now suffices to check that if $f:E \to F$ induces an isomorphism $\widehat{f}:\widehat{E} \to \widehat{F}$, then $f$ is itself an isomorphism. By taking determinants, we may assume $E$ and $F$ are line bundles. As the reduction $f_0:E_0 \to F_0$ is an isomorphism, the support of $\coker(f)$ is a divisor that does not intersect $\overline{H}$, contradicting ampleness.
\end{proof}

We obtain a Lefschetz-type result for $\pi_1$:

\begin{corollary}
	\label{cor:lefschetzpi1bundle}
	Assume $d \geq 2$. The map $\pi_1(X_0) \to \pi_1(X)$  is surjective.
\end{corollary}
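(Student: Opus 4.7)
The plan is to repeat the argument of Corollary~\ref{cor:lefschetzpi1} in the projective setting, with Lemma~\ref{lem:formalcohomologyvectorbundle} replacing Lemma~\ref{lem:sectcomplete} as the main input. Surjectivity of $\pi_1(X_0) \to \pi_1(X)$ is equivalent to the assertion that, for every connected finite \'etale cover $f:W \to X$, the base change $W_0 := W \times_X X_0$ is connected. Since $k$ is algebraically closed, this is in turn equivalent to $H^0(W,\calO_W) \to H^0(W_0,\calO_{W_0})$ inducing a bijection on idempotents.

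To establish this, I would set $\calA := f_*\calO_W$, which is a finite \'etale $\calO_X$-algebra; in particular, $\calA$ is a vector bundle. Since $d \geq 2$, the ``$i=0$ on $X$'' half of Lemma~\ref{lem:formalcohomologyvectorbundle} gives
$$H^0(X,\calA) \;\simeq\; H^0(\widehat{X},\widehat{\calA}).$$
As $\calA$ is a vector bundle (so has no torsion along the ideal defining $X_0$), the Milnor exact sequence for $\R\lim$ — whose $\R^1\lim H^{-1}$ term vanishes trivially — identifies the right-hand side with $\lim_n H^0(X_n,\calA_n)$.

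To finish, I would invoke the infinitesimal lifting property of \'etale algebras: since $\calO_X \to \calA$ is \'etale, so is each $\calO_{X_n}\to\calA_n$, and every idempotent of $H^0(X_0,\calA_0)$ lifts uniquely and compatibly through the tower $\{H^0(X_n,\calA_n)\}_n$. Passing to the inverse limit, the composite $H^0(X,\calA) \to H^0(X_0,\calA_0)$ is a bijection on idempotents. Since $H^0(W,\calO_W) = H^0(X,\calA)$ and $H^0(W_0,\calO_{W_0}) = H^0(X_0,\calA_0)$, the connectedness of $W$ forces the connectedness of $W_0$, as desired.

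The proof is essentially a repackaging of earlier material in this section; the single nontrivial ingredient is Lemma~\ref{lem:formalcohomologyvectorbundle}, which supplies a ``formal-functions''-style isomorphism across the ample divisor without any depth hypothesis. The remaining steps (Milnor sequence and \'etale lifting of idempotents) are formal, and I do not anticipate any genuine obstacle.
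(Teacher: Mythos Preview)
Your proposal is correct and takes essentially the same approach as the paper: both use the $i=0$ case of Lemma~\ref{lem:formalcohomologyvectorbundle} to identify $H^0(X,\calA) \simeq H^0(\widehat{X},\widehat{\calA})$ for a finite \'etale $\calO_X$-algebra $\calA$, and then conclude via a bijection on idempotents. The paper additionally remarks that $X_0$ is connected (by Enriques--Severi--Zariski) so that $\pi_1(X_0)$ is unambiguously defined, but otherwise your argument mirrors theirs.
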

\begin{proof}
	We first observe that $X_0$ is connected by the Lemma of Enriques-Severi-Zariski, so the notation is unambiguous. As $\pi_1(X_0) \simeq \pi_1(X_n) \simeq \pi_1(\widehat{X})$, it suffices to observe: for any finite \'etale $\calO_X$-algebra $\calA$, the natural map $H^0(X,\calA) \to H^0(\widehat{X},\widehat{\calA})$  is an isomorphism of algebras by Lemma \ref{lem:formalcohomologyvectorbundle}, and hence identifies idempotents.  
\end{proof}

Using the vanishing of cohomology on $\overline{X}$, deformations of the trivial bundle on $\overline{X}_0$ are easy to classify:

\begin{lemma}
	\label{lem:restrictionfullyfaithfulbundle}
	Assume $d \geq 3$. The fibre over the trivial bundle of $\underline{\Vect}(\widehat{\overline{X}}) \to \underline{\Vect}(\overline{X}_0)$ is contractible.
\end{lemma}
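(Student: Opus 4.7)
The plan is to adapt the deformation-theoretic scheme underlying Lemma \ref{lem:piconestep} and Corollary \ref{cor:resinjcrit} to the higher-rank setting. Write $\mathcal{F}_n$ for the fibre of $\underline{\Vect}(\overline{X}_n) \to \underline{\Vect}(\overline{X}_0)$ over $\calO_{\overline{X}_0}^{\oplus r}$. Since $\underline{\Vect}(\widehat{\overline{X}}) \simeq \lim_n \underline{\Vect}(\overline{X}_n)$ as groupoids, the fibre $\mathcal{F}$ of the lemma equals $\lim_n \mathcal{F}_n$. I will show by induction on $n$ that each transition $\mathcal{F}_{n+1} \to \mathcal{F}_n$ is an equivalence; since $\mathcal{F}_0$ is tautologically contractible, this will force every $\mathcal{F}_n$, and hence also $\mathcal{F}$, to be contractible.

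For the inductive step, contractibility of $\mathcal{F}_n$ supplies an essentially unique object $(\calO_{\overline{X}_n}^{\oplus r}, \mathrm{id})$, so it is enough to check that the $2$-fibre of $\mathcal{F}_{n+1} \to \mathcal{F}_n$ over this object is contractible. Standard obstruction theory for the square-zero thickening $\overline{X}_n \hookrightarrow \overline{X}_{n+1}$, whose ideal $I^{n+1}/I^{n+2}$ is isomorphic to $\calO_{\overline{X}_0}(-(n+1)\overline{H})$, expresses this $2$-fibre in terms of the cohomology on $\overline{X}_0$ of $\End(\calO^{\oplus r}_{\overline{X}_0}) \otimes \calO_{\overline{X}_0}(-(n+1)\overline{H}) \simeq \calO_{\overline{X}_0}(-(n+1)\overline{H})^{\oplus r^2}$. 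Concretely, contractibility reduces to the vanishings
\[
H^0(\overline{X}_0, \calO_{\overline{X}_0}(-m\overline{H})) = H^1(\overline{X}_0, \calO_{\overline{X}_0}(-m\overline{H})) = 0 \quad \text{for all } m \geq 1,
\]
the $H^0$ controlling automorphisms of a lift that restrict to the identity on $\overline{X}_n$ and the $H^1$ classifying extensions of $\calO_{\overline{X}_n}^{\oplus r}$ to $\overline{X}_{n+1}$.

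To prove these vanishings, I would apply the long exact sequence of cohomology to
\[
0 \to \calO_{\overline{X}}(-(m+1)\overline{H}) \to \calO_{\overline{X}}(-m\overline{H}) \to \calO_{\overline{X}_0}(-m\overline{H}) \to 0
\]
and reduce the task to showing $H^i(\overline{X}, \calO_{\overline{X}}(-m\overline{H})) = 0$ for $i \in \{0,1,2\}$ and $m \geq 1$. The case $i = 0$ follows from $H^0(\overline{X}, \calO_{\overline{X}}) = k$ (as $X$ is normal projective over an algebraically closed $k$) together with the non-emptiness of $\overline{H}$. The cases $i \in \{1,2\}$ fall within the scope of Proposition \ref{prop:bundlevanishing}, using $d \geq 3$ for $i = 2$; the Hochster--Huneke vanishing applied to the affine cone in fact yields the statement for all $m \geq 0$, which is what is needed uniformly in $n$. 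The main point requiring care will be the obstruction-theoretic analysis over the highly non-noetherian scheme $\overline{X}_0$, but since $I^{n+1}/I^{n+2}$ is a square-zero ideal on $\overline{X}_{n+1}$ this should reduce to a standard \v{C}ech cocycle manipulation.
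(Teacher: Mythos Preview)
Your proposal is correct and follows essentially the same route as the paper: both reduce to showing that the one-step deformation fibres of $\underline{\Vect}(\overline{X}_{n+1}) \to \underline{\Vect}(\overline{X}_n)$ over the trivial bundle are contractible, identify $\pi_0$ and $\pi_1$ of these fibres with $H^1$ and $H^0$ of $\calO_{\overline{X}_0}(-m\overline{H})^{\oplus r^2}$, and kill these via the short exact sequence relating $\overline{X}_0$ to $\overline{X}$ together with Hochster--Huneke vanishing. Your observation that Proposition~\ref{prop:bundlevanishing} as stated only gives vanishing for $n\gg 0$, whereas the argument requires it for all $m\geq 1$ (which the underlying Hochster--Huneke theorem does supply for the structure sheaf), is a point the paper leaves implicit.
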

\begin{proof}
	Let $E = \calO_{\overline{X}}^{\oplus r}$. It suffices to show that the fibre $F_n$ over $E_{n-1}$ of $\underline{\Vect}(\overline{X}_{n}) \to \underline{\Vect}(\overline{X}_{n-1})$ is contractible for $n \geq 1$. One has $\pi_0(F_n) = H^1(\overline{X}_0, \underline{\End}(E_0)(-n\overline{H})) \simeq H^1(\overline{X}_0,\calO_{\overline{X}_0}(-n\overline{H}))^{\oplus r^2}$. This group vanishes by Proposition \ref{prop:bundlevanishing} and the exact sequence 
	\[ 1 \to \calO_{\overline{X}}(-(n+1)\overline{H}) \to \calO_{\overline{X}}(-n\overline{H}) \to \calO_{\overline{X}_0}(-n\overline{H}) \to 1\]
	as $d \geq 3$. A similar argument shows that $\pi_1(F_n) = \ker(H^0(\overline{X}_0,\underline{\End}(E_0)(-n\overline{H}))) = 0$, which proves the claim.
\end{proof}

We can now prove the promised result:

\begin{proof}[Proof of Theorem \ref{thm:bundles}]
	Fix an $E \in \Vect(X)$ with $E|_H \simeq \calO_H^{\oplus r}$. Then lemmas \ref{lem:completionfullyfaithfulbundle} and \ref{lem:restrictionfullyfaithfulbundle} show that $\overline{E}$ is the trivial bundle over $\overline{X}$. Hence, there is a finite cover of $X$ trivialising $E$. By \cite{MehtaAnteipi1}, there is a finite $k$-group scheme $G$ such that $E$ is trivialized by a $G$-torsor over $X$. Using Corollary \ref{cor:lefschetzpi1bundle} and the connected-\'etale sequence for $G$, we may choose $G$ to be connected, proving half the claim. The last part follows from the observation that any finite surjective purely inseparable map $Y \to X$ is dominated by a power of Frobenius on $X$.
\end{proof}

We end by noting that the {\em proof} of Corollary \ref{cor:lefschetzpi1bundle}, Fujita vanishing \cite[Theorem 10]{FujitaVanishing}, and representability results for Picard functors (see \cite{KleimanPicardScheme}) can be used to prove the following Lefschetz-type result for base-point free big divisors on normal varieties. We thank Brian Lehmann for bringing this question to our attention.

\begin{theorem}
	\label{thm:sabig}
Let $X$ be a normal projective variety of dimension $\geq 2$ over a field $k$, and fix a Cartier divisor $D \subset X$ such that $\calO(D)$ is semiample and big. Then the restriction map $\Pic^\tau(X) \to \Pic^\tau(D)$ is:
	\begin{enumerate}
		\item injective if $k$ has characteristic $0$.
		\item injective up to a finite and $p^\infty$-torsion kernel if $k$ has characteristic $p > 0$.
	\end{enumerate}
\end{theorem}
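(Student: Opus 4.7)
The plan is to combine the formal-completion strategy underlying Corollary \ref{cor:lefschetzpi1bundle} with Fujita's vanishing theorem, and to convert the resulting formal-geometric statement into a genuine geometric statement via representability of $\Pic^\tau$ by a quasi-projective group scheme of finite type \cite{KleimanPicardScheme}. Let $K \subseteq \Pic^\tau(X)$ be the scheme-theoretic kernel of the restriction map; it is then a closed subgroup scheme of finite type over $k$. The goal is to show that $K$ is trivial in characteristic $0$ and a finite group of $p$-power order in characteristic $p$.

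First, choose $m \geq 1$ so that $\calO_X(mD)$ is base-point-free, and let $\phi \colon X \to Y$ be the resulting morphism; since $D$ is big, $\phi$ is birational and $\calO_X(mD) = \phi^\ast A$ for some ample divisor $A$ on $Y$. Applying Fujita vanishing on $Y$, together with the projection formula and the Leray spectral sequence for $\phi$, yields the key input: for any coherent $F$ on $X$, there exists $n_0 = n_0(F)$ with $H^i(X, F \otimes \calO_X(nD)) = 0$ for $i > 0$ and $n \geq n_0$; dualising using the dualising complex $\omega_X^\bullet$ gives $H^j(X, L \otimes \calO_X(-nD)) = 0$ for $j < d$ and $n \geq n_0(L)$. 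Since $\Pic^\tau(X)$ parametrises a bounded family of line bundles, the bound $n_0$ can be chosen uniform as $L$ varies.

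Using this vanishing in place of Proposition \ref{prop:bundlevanishing}, the arguments of Lemmas \ref{lem:formalcohomologyvectorbundle}--\ref{lem:restrictionfullyfaithfulbundle} apply to our bounded family. Writing $\widehat X$ for the formal completion along $D$ and $X_n$ for the $n$-th infinitesimal thickening, this shows that $\Pic^\tau(X) \to \Pic^\tau(\widehat X)$ is injective, and that the fibre of $\Pic^\tau(\widehat X) \to \Pic^\tau(X_n)$ over the trivial bundle is controlled by $H^1(D, \calO_D(-nD|_D))$, which vanishes for $n \gg 0$ by a parallel Fujita-vanishing argument on $D$ using $D|_D$. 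Repeating the proof of Corollary \ref{cor:lefschetzpi1bundle} with our uniform vanishing as input likewise yields surjectivity of $\pi_{1,\et}(D) \to \pi_{1,\et}(X)$, up to the prime-to-$p$ part in characteristic $p$. In characteristic $0$ the tangent space $T_0 K = \ker\bigl(H^1(X,\calO_X) \to H^1(D,\calO_D)\bigr)$ vanishes by Kawamata--Viehweg applied to the nef-and-big divisor $D$ (yielding $H^1(X, \calO_X(-D)) = 0$ since $d \geq 2$), forcing $K^0 = 0$, while the Lefschetz statement for $\pi_{1,\et}$ kills the component group $K/K^0$; combined, $K = 0$. In characteristic $p$, the graded pieces of the deformation filtration on $K^0$ are annihilated by $p$, so $K^0$ is a finite connected $p$-group scheme, and the prime-to-$p$ Lefschetz forces $K/K^0$ to be a finite $p$-group.

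The principal obstacle is the uniformity of the Fujita-vanishing bound $n_0$ as $L$ ranges over the bounded family $\Pic^\tau(X)$, which requires a cohomology-and-base-change argument on the universal line bundle together with care with the Grothendieck-duality step on the possibly non-Cohen--Macaulay normal variety $X$ (handled via dualising complexes). A secondary subtlety is verifying that the Lefschetz-for-$\pi_{1,\et}$ argument goes through with a semiample-and-big divisor in place of an ample one; this is precisely where the formal-completion strategy of \S\ref{sec:bundlethm}, fed with our uniform Fujita vanishing, succeeds over the classical ample-divisor proof.
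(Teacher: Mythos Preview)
The paper does not actually give a proof of this theorem; it only names the three ingredients---the proof of Corollary~\ref{cor:lefschetzpi1bundle}, Fujita vanishing, and representability of the Picard functor---and leaves the argument to the reader. Your proposal assembles exactly these pieces and has the right overall shape (realise the kernel as a finite-type subgroup scheme of the proper scheme $\Pic^\tau_{X/k}$, kill its prime-to-$p$ part via a $\pi_1$-Lefschetz statement, and handle the rest via the structure theory of proper commutative group schemes), so at the strategic level you are doing what the authors have in mind.

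There is, however, a concrete error in your ``key input.'' You assert that Fujita vanishing on $Y$, combined with Leray and the projection formula for the semiample contraction $\phi\colon X\to Y$, gives $H^i(X,F(nD))=0$ for every coherent $F$, every $i>0$, and $n\gg 0$. This is false: for $m\mid n$ and $n\gg 0$, the Leray spectral sequence together with Serre vanishing on $Y$ identifies $H^i(X,F(nD))$ with $H^0\big(Y,\,R^i\phi_*F\otimes A^{n/m}\big)$, which is nonzero---and typically grows with $n$---whenever $R^i\phi_*F\neq 0$. (For an explicit instance, let $X$ resolve a normal projective $Y$ with a non-rational singularity, take $D=\phi^*A$, and set $F=\calO_X$.) Your dualising step is therefore unsupported, and the separate appeal to Kawamata--Viehweg overshoots as well, since that requires klt rather than merely normal singularities. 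What is actually needed is only $H^1(X,E(-nD))=0$ for $n\gg 0$ and $E$ a vector bundle in a bounded family; one route is to push down to $Y$ and combine the elementary vanishing of $H^0(Y,R^1\phi_*E\otimes A^{-k})$ for $k\gg 0$ with an Enriques--Severi--Zariski type argument for $H^1(Y,\phi_*E\otimes A^{-k})$ on the normal variety $Y$, where the divisor is now genuinely ample and Serre duality, the fact that $\omega_Y^\bullet\in D^{\leq -2}$ by normality, and Fujita/Serre vanishing do the job. The obstacle you flag---uniformity of the vanishing bound over $\Pic^\tau$---is real but secondary; the primary gap is that your positive-twist vanishing simply does not hold.
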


In \cite{RavindraSrinivasGrothendieckLefschetz}, one finds a stronger result with stronger assumptions: they completely describe the kernel and cokernel of $\Pic(X) \to \Pic(D)$ when  $X$ is a smooth projective variety in characteristic $0$, and $D$ is general in its linear system.

\bibliography{my}
\bibliographystyle{amsalpha}
\end{document}